\DeclareMathOperator{\Sym}{Sym} 
\DeclareMathOperator{\sym}{sym} 
\DeclareMathOperator{\SRep}{srep} 
\DeclareMathOperator{\SL}{SL} 
\DeclareMathOperator{\SP}{SP} 
\DeclareMathOperator{\SO}{SO} 
\DeclareMathOperator{\SR}{SR} 
\DeclareMathOperator{\R}{R} 
\DeclareMathOperator{\Or}{O} 
\DeclareMathOperator{\Lie}{Lie} 
\DeclareMathOperator{\Solp}{Solp} 
\DeclareMathOperator{\Oolp}{Oolp} 
\DeclareMathOperator{\End}{End} 
\DeclareMathOperator{\Aut}{Aut} 
\DeclareMathOperator{\Hom}{Hom} 
\DeclareMathOperator{\rad}{rad}
\DeclareMathOperator{\rep}{rep}
\DeclareMathOperator{\GL}{GL} 
\DeclareMathOperator{\stab}{stab}
\definecolor{lblue}{rgb}{0.3,0.0,4.4}
\newcommand*{\punkte}{\dots\unkern}
\newcommand{\A}{\mathcal{A}}
\newcommand{\Q}{\mathcal{Q}} 
\newcommand{\Orb}{\mathcal{O}} 
\newcommand{\N}{\mathcal{N}}
\newcommand{\gl}{\mathfrak{gl}}
\newcommand{\fg}{{\mathfrak g}}
\newcommand{\fb}{{\mathfrak b}}
\newcommand{\fsp}{\mathfrak{sp}}
\newcommand{\fo}{{\mathfrak o}}
\newcommand{\fsym}{\mathfrak{sym}}
\newcommand{\dimv}{\underline{\dim}}
\newcommand{\df}{\underline{d}} 
\newcommand{\form}{\langle-,-\rangle}
\def\presuper#1#2%
\numberwithin{equation}{section}
\newtheorem{theorem}{Theorem}[section]
\newtheorem{lemma}[theorem]{Lemma}
\newtheorem{definition}[theorem]{Definition}
\newtheorem{proposition}[theorem]{Proposition}
\newtheorem{corollary}[theorem]{Corollary}
\newtheorem{example}[theorem]{Example}
\newtheorem{convention}[theorem]{Convention}
\theoremstyle{remark}
\newtheorem{remark}[theorem]{Remark}
\author{Magdalena Boos \thanks{Ruhr University Bochum, Faculty of Mathematics,  44780 Bochum, Germany. magdalena.boos-math@rub.de}~,~Giovanni Cerulli Irelli \thanks{Sapienza University of Rome, Department SBAI, 00161 Rome, Italy. giovanni.cerulliirelli@uniroma1.it}~,~ Francesco Esposito \thanks{University of Padova, Department of Mathematics, 35121 Padova, Italy. esposito@math.unipd.it}} 
\long\def\nnfoottext#1{\insert\footins{\footnotesize
    \interlinepenalty\interfootnotelinepenalty
    \splittopskip\footnotesep
    \splitmaxdepth \dp\strutbox \floatingpenalty \@MM
    \hsize\columnwidth \@parboxrestore
   \edef\@thefnmark{}
   \edef\@currentlabel{}\@makefntext
    {\rule{\z@}{\footnotesep}\ignorespaces
      #1\strut}}}
\begin{document}
\parindent0pt

\makeatletter
\let\@fnsymbol\@arabic
\makeatother
\title{\bf Parabolic orbits of $2$-nilpotent elements for classical groups}
\date{}
\maketitle
\begin{abstract}
We consider the conjugation-action of the Borel subgroup of the symplectic or the orthogonal group on the variety of nilpotent complex elements of nilpotency degree $2$ in its Lie algebra. We translate the setup to a representation-theoretic context in the language of a symmetric quiver algebra. This makes it possible to provide a parametrization of the orbits via a combinatorial tool that we call symplectic/orthogonal oriented link patterns. We deduce information about numerology. We then generalize these classifications to standard parabolic subgroups for all classical groups. Finally, our results are restricted to the nilradical.
\end{abstract}
%\tableofcontents
\section{Introduction}\label{intro}
Let $G$ be a classical complex group of rank $n$. Then G is either the general linear group $\GL_n(K)$ or the symplectic group $\SP_{2l}(K)$ or the orthogonal group $\Or_n(K)$, where $K=\mathbf{C}$. Let $\mathfrak{g}$ be the corresponding Lie algebra.

The study of the adjoint action of (subgroups of) $G$ on $\mathfrak{g}$ and numerous variants thereof is a well-established and much considered task in algebraic Lie theory. Employing methods of geometric invariant theory, a classical topic is the study of orbits and their closures, which is also known as the \emph{vertical} problem \cite{Kr}. 

One famous example of a classification problem alike is the study of $\GL_n$-conjugation (or $\SL_n$-conjugation, this doesn't make a difference) on the variety of complex matrices of square--size $n$.  A complete system of representatives up to conjugation is given by the  Jordan canonical form \cite{Jo2} which dates back to the 19\textsuperscript{th} century. This system of representatives is given by continuous parameters, the eigenvalues of the matrix, and discrete parameters. In order to determine the latter, it suggests itself to restrict the action to the nilpotent cone, namely to $\GL_n$-conjugation on the set of nilpotent matrices. The number of conjugacy classes of nilpotent matrices is finite and can be described combinatorially by partitions of $n$.

One generalization of this setup is obtained by restricting the acting group from $G$ to parabolic subgroups $P\subseteq G$. In particular, the Borel subgroup $B$ is considered, then, and the question about a variety admitting only finitely many orbits is closely related to the concept of so-called spherical varieties \cite{Br}. One example of  a  parabolic action can be found in \cite{HiRoe}, where Hille and R\"ohrle prove a finiteness criterion for the number of orbits of parabolic conjugation on the unipotent radical of $\mathfrak{g}$.

Another adaption of the above setup is given by restricting the nilpotent cone $\N$ of nilpotent matrices to certain subvarieties. For example, Melnikov parametrizes the Borel-orbits in the variety of $2$-nilpotent elements in the nilradical $\mathfrak{n}$ of $\mathfrak{g}=\Lie(\GL_n(K))$ in \cite{Me1} which is inspired by the study of orbital varieties. A parametrization in the symplectic setup is published by Barnea and Melnikov in \cite{BaMe}. In \cite{GMMP}, Gandini, Maffei, M\"oseneder Frajria and Papi consider the more general approach of $B$-stable abelian subalgebras of the
nilradical of $\mathfrak{b}$ in which they parametrize the $B$-orbits and describe their closure
relations.

In this article, we consider the algebraic subvariety $\N(2)$ of $2$--nilpotent elements of the nilpotent cone of $\mathfrak{g}$, namely
$$
\N(2)=\N(2,G)=\{x\in\mathfrak{g}|\,x^2=0\}.
$$
Every parabolic subgroup $P$ of $G$ acts on $\N(2)$. It is known that the number of orbits is always finite, since Panyushev shows finiteness for the Borel-action in \cite{Pan2}. In case $G=\GL_n(K)$, a parametrization of the $P$-orbits  and a description  of their degenerations is given in \cite{BoRe} and \cite{B2} for each parabolic subgroup $P\subset G$.

Our first goal in this article is to prove in a different manner that there are only finitely many $B$-orbits in $\N(2)$ for the remaining classical groups, that is, for types $B$, $C$ and $D$. We approach the problem in a way closely related to \cite{BoRe} from a quiver-theoretic point of view - but instead of translating to the representation variety of a quiver with relations of a special dimension vector, we translate the orbits to certain (isomorphism classes of) representations of a \textit{symmetric} quiver with relations of a fixed dimension vector. In this setup we show that there are only finitely many of the latter which are parametrized by combinatorial objects which we call symplectic/orthogonal oriented  link patterns, see Definitions~\ref{Def:SOLP} and \ref{Def:OOLP}.

For example the Borel-orbits of $2$-nilpotent matrices in $\mathfrak{o}_4$ are parametrized by these five patterns:
$$
\tiny
\begin{tabular}{|c|c|c|c|c|}
\hline&&&&\\[-1ex]
\xymatrix{\textrm{\tiny{1}}\ar@/^1pc/[r]&
\textrm{\tiny{2}}}
&\xymatrix{\textrm{\tiny{1}}&
\textrm{\tiny{2}}\ar@/_1pc/[l]}
&\xymatrix{\textrm{\tiny{1}}\ar@/^1pc/[r]|{\bullet}&
\textrm{\tiny{2}}}
&\xymatrix{\textrm{\tiny{1}}&
\textrm{\tiny{2}}\ar@/_1pc/[l]|{\bullet}}&\xymatrix{\textrm{\tiny{1}}&
\textrm{\tiny{2}}}
\\\hline
\end{tabular}
$$

The following five matrices give a system of representatives of these orbits.
$$
\tiny
\begin{tabular}{|c|c|c|c|c|}\hline
&&&&\\[-1ex]
$\left(\begin{array}{cccc}
0&0&0&0\\
1&0&0&0\\
0&0&0&0\\
0&0&-1&0\end{array}\right)$ 
& $\left(\begin{array}{cccc}
0&1&0&0\\
0&0&0&0\\
0&0&0&-1\\
0&0&0&0\end{array}\right)$ 
&$\left(\begin{array}{cccc}
0&0&0&0\\
0&0&0&0\\
1&0&0&0\\
0&-1&0&0\end{array}\right)$ 
&$\left(\begin{array}{cccc}
0&0&1&0\\
0&0&0&-1\\
0&0&0&0\\
0&0&0&0\end{array}\right)$ 
&$\left(\begin{array}{cccc}
0&0&0&0\\
0&0&0&0\\
0&0&0&0\\
0&0&0&0\end{array}\right)$\\\hline
\end{tabular}
$$

Our second goal is to parametrize all orbits explicitly. The approach via a symmetric quiver makes it possible to classify the orbits by  representations, and thus, by combinatorial data. 

We afterwards generalize these results to parabolic subgroups $P\subset G$. To do so, we consider the isotropic flag corresponding to $P$ and realize it as a representation $M_P$. The parabolic $P$ equals the symmetric stabilizer of $M_P$ and we obtain a classification of the $P$-orbits in $\N(2)$ as in the Borel case.

In the last section, we restrict our results to the action of $P$ on the nilradical $\mathfrak{n}(2):=\N(2)\cap \mathfrak{n}$ of $2$-nilpotent upper-triangular matrices in $\fg$ and obtain complete parametrizations, here. 
In the symplectic case, the parametrization coincides with the parametrization by so-called symplectic link patterns of \cite{BaMe}, even though the methods used to prove it are different.  

{\bf Acknowledgments:} The authors would like to thank Giovanna Carnovale for her input concerning the results and methods of this work. Furthermore, the first author thanks Martin Bender for many discussions about Lie-theoretical background.

 \section{Classical groups and Lie algebras}\label{sect:classicalLie}
Let  $K$ be the field of complex numbers $K:=\mathbf{C}$ and let $n$ be an integer. We consider the complex classical groups, that is, the general linear group $\GL_n:=\GL_n(K)$, the symplectic group $\SP_{n}:=\SP_{n}(K)$, whenever $n=2l$ for some integer $l$, and the orthogonal group $\Or_n:=\Or_n(K)$. The corresponding Lie algebras are denoted by $\gl_n:=\gl_n(K)$, $\fsp_n:=\fsp_n(K)$ and $\fo_n:=\fo_n(K)$.

In general, given a vector space $V$ endowed with a non-degenerate bilinear form $\form$, let us denote by $\Sym(V)$ the group of symmetries of the vector space $V$ which preserve $\form|_{{V}\times V}$. Then $\Sym(V)$ equals either the symplectic group $\SP(V)$ or the orthogonal group $\Or(V)$, depending on whether $(V,\form)$ is symplectic or orthogonal). We define $\fsym(V):=\Lie (\Sym(V))$.

Let $l$ be an integer, then we denote by $J=J_l$ the $l\times l$ anti-diagonal matrix with every entry on the anti-diagonal being $1$: 
$$
J_l=\left(\vcenter{\xymatrix{0\ar@{..}[r]\ar@{..}[d]&1\ar@{..}[dl]\ar@{..}[d]\\1\ar@{..}[r]&0}}\right)
$$
It is easy to see (and well--known) that $J^{-1}=J$ and that  the conjugate $J\presuper{T\!\!\!}{A}J$ by $J$ of the \textit{transpose} $\presuper{T\!\!\!}{A}$ of a  matrix $A\in K^{l\times l}$ is given by "the transpose of $A$ with respect to the anti-diagonal". For example, for $l=2$, given 
$A=\left[\begin{array}{cc}a&b\\c&d\end{array}\right]:$ 
\[
J \presuper{T\!\!\!}{A}J=\left[\begin{array}{cc}
0&1\\1&0
\end{array}\right]\left[
\begin{array}{cc}
a&c\\b&d
\end{array}\right]\left[
\begin{array}{cc}
0&1\\1&0
\end{array}\right]
=\left[
\begin{array}{cc}
d&b\\c&a
\end{array}\right].
\]
We set 
\[\presuper{\mathfrak{T}}{A}:=J \presuper{T\!\!\!}{A}J.\]

In this notation, it is easy to write down the elements of the symplectic and orthogonal Lie algebras.

\subsection{Symplectic group}
Let $V$ be an $n=2l$--dimensional complex vector space. Let us fix a basis of $V$ and a bilinear form $F=F_V:V\times V\rightarrow K$, $F(v,w)=\langle v,w\rangle$, associated with the matrix (still denoted by $F$)
\begin{equation}\label{Eq:FormSymplectic}
F=\left[\begin{array}{cc}0&J_l\\-J_l&0\end{array}\right].
\end{equation}

The symplectic group $\SP_{n}$ consists of those matrices $A\in \GL_{n}$ which preserve this bilinear form (i.e. $\langle Av,Aw\rangle=\langle v,w\rangle$); in other words $A$ satisfies the equation $$\presuper{T\!}{A}FA=F.$$ 
The Lie algebra $\fsp_{n}$ of $\SP_{n}$ consists of those matrices $a\in \gl_{n}$ which fulfill
\begin{equation}\label{Eq:LieAlge}
\presuper{T\!}{a}F+Fa=0.
\end{equation} We write  
$a=\left[\begin{array}{cc}A&B\\C&D\end{array}\right]$, where $A, B, C, D$ are $l\times l$-blocks, so that condition \eqref{Eq:LieAlge} translates into the following equations:
\begin{equation}\label{Eq:SympLie}
a=\left[\begin{array}{cc}A&B=\presuper{\mathfrak{T}}{B}\\C=\presuper{\mathfrak{T}}{C}&D=-\presuper{\mathfrak{T}}{A}\end{array}\right].
\end{equation}
In particular, $\fsp_{n}$ has dimension $l^2+l(l+1)=l(n+1)$. The intersection of $\fsp_{n}$ with the Borel subalgebra $\fb_{n}:=\fb_n(K)$ of upper-triangular matrices is a solvable subalgebra of $\fsp_{n}$ of dimension $l(l+1)=l^2+l$. Since $\fsp_{n}$ is a Lie algebra of type $C_l$, the number of positive roots is $l^2$ and the number of simple roots is $l$; we hence see that $\fb(\fsp_{n}):=\fsp_{n}\cap \fb_{n}$ is a solvable subalgebra of maximal dimension and hence a Borel subalgebra. This is one of the advantages of working with the form $F$ given by \eqref{Eq:FormSymplectic}.

\subsection{Orthogonal group}
Let $V$ be an $n$--dimensional complex vector space (where $n$ can be  even or odd). Let us fix a basis of $V$ and let us choose the non--degenerate bilinear form on $V$ associated with the matrix $F=J_n$. The orthogonal group $\Or_{n}$ consists of those matrices $A\in \GL_{n}$ for which $\presuper{T\!}{A}FA=F$ holds true. The Lie algebra $\fo_{n}$  consists of those matrices $a\in \gl_{n}$ satisfying \eqref{Eq:LieAlge} which translates into the relation
\begin{equation}\label{Eq:OrthLie}
a=-\presuper{\mathfrak{T}}{a}.
\end{equation}
In particular, $\fo_{n}$ has dimension $\frac{n(n-1)}{2}$. 
The intersection of $\fo_{n}$ with the Borel subalgebra $\fb_{n}$ of upper-triangular matrices, is a solvable subalgebra of $\fo_{n}$.
\begin{itemize}
\item If $n=2l$, the dimension of such a solvable subalgebra is easily seen to be 
\[\frac{n(n-1)}{2}-l(l-1)=l(2l-1)-l(l-1)=l^2.\] Since $\fo_{n}$ is a Lie algebra of type $D_l$, the number of positive roots is $l(l-1)$ and the number of simple roots is $l$; we hence see that $\fb(\fo_{n}):=\fo_{n}\cap \fb_{n}$ is a solvable subalgebra of maximal dimension and hence a Borel subalgebra.

\item Similarly, if $n=2l+1$, the dimension of $\fb(\fo_{n}):=\fo_{n}\cap \fb_{n}$ is easily seen to be 
\[\frac{n(n-1)}{2}-(l(l-1)+l)=(2l+1)l-l^2=l^2+l.\] Since $\fo_{n}$ is a Lie algebra of type $B_l$, the number of positive roots is $l^2$ and the number of simple roots is $l$; we hence see that $\fb(\fo_{n})$ is a solvable subalgebra of maximal dimension and hence a Borel subalgebra. 
\end{itemize}

As before, this is one of the benefits of working with the form F given by $J_n$.

\section{Background on (symmetric) quiver representations}
 We include basic knowledge about the representation theory of finite-dimensional algebras via finite quivers \cite{ASS} before introducing the notion of a symmetric quiver and discussing its representations. This theoretical background will be necessary later on to prove our main results.
 
A \textit{finite quiver} $\Q$ is a directed graph $\Q=(\Q_0,\Q_1,s,t)$, such that $\Q_0$ is a finite set of \textit{vertices} and $\Q_1$ is a finite set of \textit{arrows}, whose elements are written as $\alpha\colon s(\alpha)\rightarrow t(\alpha)$.
The \textit{path algebra} $K\Q$ is defined as the $K$-vector space with a basis consisting of all paths in $\Q$, that is, sequences of arrows $\omega=\alpha_s\punkte\alpha_1$ with $t(\alpha_{k})=s(\alpha_{k+1})$ for all $k\in\{1,\punkte,s-1\}$; formally included is a path $\epsilon_i$ of length zero for each $i\in \Q_0$ starting and ending in $i$. The multiplication  is defined as the  concatenation of paths $\omega= \alpha_s ... \alpha_1$ and $\omega' = \beta_t ... \beta_1$, that is,
\begin{center}
 $\omega\cdot\omega'=\left\{\begin{array}{ll}\alpha_s ... \alpha_1\beta_t ... \beta_1,&~\textrm{if}~t(\beta_t)=s(\alpha_1);\\
0,&~\textrm{otherwise.}\end{array}\right.$\end{center}

Let $\rad(K\Q)$ be the \textit{path ideal} of $K\Q$,  which is the (two-sided) ideal generated by all paths of positive lengths. An ideal $I\subseteq K\Q$ is called \textit{admissible} if there exists an integer $s$ with $\rad(K\Q)^s\subset I\subset\rad(K\Q)^2$. If this is the case for an ideal $I$, then the algebra $\A:=K\Q/I$ is finite-dimensional.

We denote by $\rep(K\Q)$ the abelian $K$-linear category of all representations of $\Q$ (which is equivalent to the category of $K\Q$-modules). In more detail, the objects are given as finite-dimensional \textit{($K$-)representations} of $\Q$ which, in more detail, are tuples \[((M_i)_{i\in \Q_0},(M_\alpha\colon M_i\rightarrow M_j)_{(\alpha\colon i\rightarrow j)\in \Q_1}),\] where the $M_i$ are $K$-vector spaces, and the $M_{\alpha}$ are $K$-linear maps. A \textit{morphism of representations} $M=((M_i)_{i\in \Q_0},(M_\alpha)_{\alpha\in \Q_1})$ and
 \mbox{$M'=((M'_i)_{i\in \Q_0},(M'_\alpha)_{\alpha\in \Q_1})$} consists of a tuple of $K$-linear maps $(f_i\colon M_i\rightarrow M'_i)_{i\in \Q_0}$, such that $f_jM_\alpha=M'_\alpha f_i$ for every arrow $\alpha\colon i\rightarrow j$ in $\Q_1$.
 
Let us denote by   $\rep(\A)$ the category of representations of $\Q$ bound by $I$: For a representation $M$ and a path $\omega$ in $\Q$ as above, we denote $M_\omega=M_{\alpha_s}\cdot\punkte\cdot M_{\alpha_1}$. A representation $M$ is called \textit{bound by $I$}, if $\sum_\omega\lambda_\omega M_\omega=0$ whenever $\sum_\omega\lambda_\omega\omega\in I$. The category $\rep(K\Q/I)$ is equivalent to the category of finite-dimensional $\A$-representations.

Let $M$ be an $\A$-representation, let $B_i\subseteq \epsilon_iM$ be a $K$-basis of $\epsilon_iM$ for every $i\in \Q_0$ and let $B$ be the disjoint union of these sets $B_i$. We define the \textit{coefficient quiver} $\Gamma(M):=\Gamma(M,B)$ of $M$ with respect to the basis $B$ to be the quiver with exactly one vertex for each element of
$B$, such that for each arrow $\alpha\in\Q_1$ and every element $b\in B_{s(\alpha)}$ we have \[M_{\alpha}(b)=\sum_{c\in B_{t(\alpha)}}\lambda_{b,c}^{\alpha}c \]
with $\lambda_{b,c}^{\alpha}\in K$. For each $\lambda_{b,c}^{\alpha}\neq 0$ we draw an arrow $b\rightarrow c$ with label $\alpha$. Thus, the quiver reflects the coefficients corresponding to the representation $M$ with respect to the chosen basis $B$.

Given a representation $M\in\rep(\A)$, its \textit{dimension vector} $\dimv M\in\mathbf{N}\Q_0$ is defined by $(\dimv M)_{i}=\dim_K M_i$ for $i\in \Q_0$. For a fixed dimension vector $\df\in\mathbf{N}\Q_0$, we denote by $\rep(\A,\df)$ the full subcategory of $\rep(\A)$ which consists of representations of dimension vector $\df$.

Let $M$ and $M'$ be two representations of $\A$. We denote by $\Hom_{\A}(M,M')$ the space of homomorphisms from $M$ to $M'$, by $\End_{\A}(M)$ the set of endomorphisms and by $\Aut_{\A}(M)$ the group of automorphisms of $M$ in $\rep(\A)$.

For certain finite-dimensional algebras a convenient tool for the classification of the indecomposable representations (up to isomorphism) and of their homomorphisms is the \textit{Auslander--Reiten quiver} $\Gamma(\A)$ of $\rep(\A)$. Its vertices $[M]$ are given by the isomorphism classes of indecomposable representations of $\rep(\A)$; the arrows between two such vertices $[M]$ and $[M']$ are parametrized by a basis of the space of so-called irreducible maps $f\colon M\rightarrow M'$.

%One standard technique to calculate the Auslander--Reiten quiver is the \textit{knitting process} (see, for example, \cite[IV.4]{ASS}).
% In some cases, the Auslander--Reiten quiver $\Gamma(\Q,I)$ can be calculated by using \textit{covering techniques} (see \cite{Ga3} or \cite{BoGa}). We will make use of the latter and describe some more details on these techniques later on.\\[1ex]
By defining the affine space $\R_{\df}(K\Q):= \bigoplus_{\alpha\colon i\rightarrow j}\Hom_K(K^{d_i},K^{d_j})$, one realizes that its points $m$ naturally correspond to representations $M\in\rep(K\Q,\df)$ with $M_i=K^{d_i}$ for $i\in \Q_0$. 
 Via this correspondence, the set of such representations bound by $I$ corresponds to a closed subvariety $\R_{\df}(\A)\subset \R_{\df}(K\Q)$.
 
The algebraic group $\GL_{\df}=\prod_{i\in \Q_0}\GL_{d_i}$ acts on $\R_{\df}(K\Q)$ and on $\R_{\df}(\A)$ via base change, furthermore the $\GL_{\df}$-orbits $\Orb_M$ of this action are in bijection to the isomorphism classes of representations $M$ in $\rep(\A,\df)$.

The notion of symmetry for a finite quiver comes into the picture as follows: A \textit{symmetric quiver} is a pair $(\Q,\sigma)$ where $\Q$ is a finite quiver and $\sigma:\Q_0\cup \Q_1\rightarrow \Q_0\cup \Q_1$ is an involution, such that  $\sigma(\Q_0)= \Q_0$, $\sigma(\Q_1)= \Q_1$ and 
every arrow $\xymatrix@1{i\ar^\alpha[r]&j}$ is sent to the arrow $\xymatrix@1{\sigma(j)\ar^{\sigma(\alpha)}[r]&\sigma(i)}$. 

In this article, we represent the action of $\sigma$ by adding the symbol $\ast$. For example, 
$$
\xymatrix{1\ar^a[r]&2\ar^b[r]&3\ar^{b^\ast}[r]&2^\ast\ar^{a^\ast}[r]&1^\ast}
$$
is the symmetric quiver $(\Q,\sigma)$ with underlying quiver $\Q$ being equioriented of type $A_5$, such that $\sigma$ acts on $\Q$ by sending an elment $x\in \Q_0\cup \Q_1$ to $x^\ast$; the vertex $3$ is fixed by $\sigma$. 

A \textit{symmetric ($K$-)representation} of a symmetric quiver $(\Q,\sigma)$  is a representation $M=(\{M_p\}_{p\in \Q_0},\{M_\alpha\}_{\alpha\in \Q_1})$ in $\rep(K\Q)$ endowed with a  non--degenerate bilinear form \[\langle-,-\rangle:\bigoplus_{p\in \Q_0}M_p\times \bigoplus_{q\in \Q_0}M_q\rightarrow K,\] such that:
\begin{enumerate}
\item The equation \begin{equation}\label{Eq:ConditionBilinearForm}
\langle-,-\rangle|_{M_p\times M_q}=0
\end{equation}
holds true, unless $q=\sigma(p)$;
\item  The equation
\begin{equation}\label{Eq:ConditionSymmetricRep}
\langle M_\alpha(v),w\rangle+\langle v,M_{\sigma(\alpha)}(w)\rangle=0
\end{equation} holds true for every $v\in M_p$, $w\in M_{\sigma(q)}$ and for every arrow $\xymatrix{p\ar^\alpha[r]&q}\in\Q_1$.
\end{enumerate}

A representation $(M,\langle-,-\rangle)$ of a symmetric quiver $(\Q,\sigma)$ is called \emph{symplectic}, if the bilinear form is skew--symmetric and it is called \emph{orthogonal}, if the bilinear form is symmetric. 

Let $(\Q,\sigma)$ be a symmetric quiver and let $I$ be an ideal of $K\Q$, such that $\sigma\cdot I\subset I$. The involution $\sigma$ induces an involution on the algebra $\A:=K\Q/I$ and we can consider symplectic and  orthogonal  representations of the algebra $\A$: these are symplectic or orthogonal representations of $\A$  which are annihilated by the ideal $I$. 

We denote the categories of symmetric, symplectic and orthogonal representations by $\SRep(\A)$ and make sure that it will always be clear from the context which one is meant. The restriction to the full subcategory of representations of a fixed dimension vector $\df$ is denoted by $\SRep(\A,\df)$. Analogously to the non-symmetric case, we associate a variety $\SR_{\df}(\A)$ to this category; and denote $M_x\in \SRep(\A,\df)$ for $x\in\SR_{\df}(\A)$. 

%TODO: special structure, describe it in detail: What does hom in srep mean? what is the difference of Hom(srep) and Hom(rep)?? as for End later
Let $(M,\form)$ and $(M',\form')$ be two representations in $\SRep(\A)$.  Let us denote by $\Hom_{\SRep(\A)}(M,M')$ the space of homomorphisms from $M$ to $M'$ and by $\Aut_{\SRep(\A)}(M)$ the group of automorphisms of $M$ in $\SRep(\A)$. 

We have
$$
\Aut_{\SRep(K\Q)}(M)=\Aut_{K\Q}(M)\bigcap\prod_{p\neq\sigma(p)}\Sym(M_p\oplus M_{\sigma(p)})\times\prod_{p=\sigma(p)}\Sym(M_p).
$$

 Let $\End_{\SRep(\A)}(M)$ be the Lie algebra of $\Aut_{\SRep(\A)}(M)$. An element $A=\{A_p\}_{p\in \Q_0}$ of $\End_{\SRep(\A)}(M)$ is called a \emph{symmetric endomorphism} of $(M,\form)$; it is an element of $\End_{\A}(M)$ with the following extra conditions:
\begin{eqnarray}\nonumber
\textrm{If~$M$~ is~ symplectic:}&&\textrm{ $p\neq\sigma(p) \Rightarrow A_{\sigma(p)}=-\presuper{\mathfrak{T}}{{A}_p}$ }\\\label{Eq:SymEndoCond1}
\textrm{If~$M$~is~ orthogonal:}&&\textrm{ $p\neq\sigma(p) \Rightarrow A_{\sigma(p)}=\presuper{\mathfrak{T}}{{A}_p}$}\\\label{Eq:SymEndoCond2}
\textrm{Furthermore,} &&\textrm{$p=\sigma(p)\Rightarrow
\presuper{T\!\!\!}{A}_pF_p+F_p A_p=0.$}
\end{eqnarray}
\begin{remark}
Conditions \eqref{Eq:SymEndoCond1} follow by imposing the relation
$$
\left[\begin{array}{cc}\presuper{T\!\!\!}{A_p}&0\\0&\presuper{T\!\!\!}{A_{\sigma(p)}}\end{array}\right]\,\left[\begin{array}{cc}0&J\\\pm J&0\end{array}\right]+
\left[\begin{array}{cc}0&J\\\pm J&0\end{array}\right]\,\left[\begin{array}{cc}A_p&0\\0&A_{\sigma(p)}\end{array}\right]=0.
$$
Condition \eqref{Eq:SymEndoCond2} means that $A_p$ belongs to the Lie algebra of $\Sym(V_p)$.
\end{remark}
We hence have 
$$
\End_{\SRep(K\Q)}(M)=\End_{K\Q}(M)\bigcap\prod_{p\neq\sigma(p)}\fsym(V_p\oplus V_{\sigma(p)})\times\prod_{p=\sigma(p)}\fsym(V_p).
$$

 \section{B-orbits vs. isoclasses of symmetric representations}\label{sect:translation}
Let $G\in \{\SP_n, \Or_n\}$ where $n=2l$ in the symplectic case and $n\in\{2l, 2l+1\}$ in the orthogonal case for some integer $l\in\mathbf{N}$ and let $\fg$ be the corresponding symplectic or orthogonal Lie algebra.
Let $B$ be the standard Borel subgroup of $G$, that is, the subgroup of $G$ of upper-triangular matrices which is obtained by intersecting the Borel subgroup of $\GL_n$ with $G$. 

 We consider the algebraic variety $\N(2)$ of $2$--nilpotent elements of $\mathfrak{g}$
\[\N(2)=\N(2,G)=\{x\in\mathfrak{g}|\,x^2=0\}.\]
Then $B$ acts on $\N(2)$ via conjugation and our first aim in this article is to prove by means of symmetric quiver representations that the action admits only a finite number of orbits. We thereby specify an explicit parametrization of the orbits.

\subsection{Symmetric quiver setup}\label{ssect:quivSetup}

We define   $\A(l)$ to be the algebra given by the quiver
$$
Q_l:\;\xymatrix{
1\ar^{a_1}[r]&2\ar^{a_2}[r]&\cdots\ar^{a_{k-1}}[r]&k\ar^{a_k}[r]&\omega\ar^\alpha@(lu,ru)\ar^{a_k^\ast}[r]&k^\ast\ar^{a_{k-1}^\ast}[r]&\cdots\ar^{a_2^\ast}[r]&2^\ast\ar^{a_1^\ast}[r]&1^\ast
}
$$
with relations $\alpha^2=a_l^\ast a_l=0$. Notice that the $2l$ vertices of $Q_l$ are colored; the choice of the color will be clear in a few lines.

We consider the dimension vector \[\mathbf{d}_\bullet=(d_1,...,d_l,d_{\omega},d_{k^*},...,d_{1^*})=(1,2,\cdots,l-1,l,n,l,l-1,\cdots, 2,1)\] 
and the variety $\SR_{\mathbf{d}_\bullet}(\A(l))$. This variety is acted upon by the group 
\[\GL_{\sym}:=\GL(d_1)\times \GL(d_2)\times\cdots\times \GL(d_l)\times \Sym(n)\] where $\Sym(n)$ denotes either the symplectic or the orthogonal group on a vector space of dimension $n$. Inside the variety $\SR_{\mathbf{d}_\bullet}(\A(l))$ we consider the open subset $\SR_{\mathbf{d}_\bullet}(\A(l))^0$ corresponding to the full subcategory $\SRep(\A(l),\mathbf{d}_\bullet)^0$ of $\SRep(\A(l),\mathbf{d}_\bullet)$ of those representations whose linear maps associated with the arrows $a_i$ and $a_i^\ast$ have maximal rank. For an element $x\in\SR_{\mathbf{d}_\bullet}(\A(l))^0$, we have the natural notion of its $\GL_{\sym}$-stabilizer.  We denote the corresponding stabilizer of the representation $M_x$ in the category $\SRep(\A(l),\mathbf{d}_\bullet)$ by $\stab_{\GL_{\sym}}(M_x)$.

\begin{example}\label{ex:Borel}
Let us consider the quiver $\Q_2$
$$
\xymatrix{
1\ar^a[r]&2\ar^b[r]&3\ar^{b^\ast}[r]\ar^\alpha@(ul,ur)&2^\ast\ar^{a^\ast}[r]&1^\ast
}
$$
and the algebra $\A(2)=K\Q/(\alpha^2,b^\ast b)$.  Let us consider the $\A(2)$-representation $M_0$ given by
\begin{equation}\label{Ex:Flag4}
\xymatrix@R=3pt{
1\ar^a[r]&2\ar^b[r]&3&&\\
&2\ar^b[r]&3&&\\
&&3\ar^{b^\ast}[r]&2^\ast&\\
&&3\ar^{b^\ast}[r]&2^\ast\ar^{a^\ast}[r]&1^\ast\\
}
\end{equation}
and the $\A(2)$-representation $M_0'$ given by
\begin{equation}\label{Ex:Flag4'}
\xymatrix@R=3pt{
1\ar^a[r]&2\ar^b[r]&3&&\\
&2\ar^b[r]&3&&\\
&&3&&\\
&&3\ar^{b^\ast}[r]&2^\ast&\\
&&3\ar^{b^\ast}[r]&2^\ast\ar^{a^\ast}[r]&1^\ast\\
}
\end{equation}
In view of \eqref{Eq:ConditionSymmetricRep}, in order for $M$ to be symmetric, the arrows $a,b$ of $\Q_2$  must act by $1$ and the arrows $a^\ast,b^\ast$ must act as $-1$.

The symmetric structure of $M_0$ (that is, the choice of a non-degenerate bilinear form) is induced by the symmetric structure on the vector space at vertex $3$. In the symplectic case, this bilinear form is given by the matrix
$$
\left[\begin{array}{cccc}
0&0&0&1\\
0&0&1&0\\
0&-1&0&0\\
-1&0&0&0\\
\end{array}
\right];$$ 
and for the orthogonal case, it is defined by the anti--diagonal matrix with every entry on the anti--diagonal being $1$ (see Section \ref{sect:classicalLie}).

The symplectic space $\End_{\SRep(\A(2))}(M_0)$ has dimension $6$ and can be represented by the matrix
$$
\left[\begin{array}{cc|cc}
a&c&f&g\\
0&b&e&f\\\hline
0&0&-b&-c\\
0&0&0&-a\\
\end{array}
\right].$$ 
In orthogonal type, it is $4$--dimensional and represented by
$$
\left[\begin{array}{cc|cc}
a&c&f&0\\
0&b&0&-f\\\hline
0&0&-b&-c\\
0&0&0&-a\\
\end{array}
\right].$$ 
In a similar way, we proceed for the orthogonal group $\Or_{5}$ and look at the representation $M_0'$. 
Then, as above, the stabilizer is given by
$$
\left[\begin{array}{ccc|cc}
a&c&e&f&0\\
0&b&d&0&-f\\\hline
0&0&0& -d&-e\\
0&0&0& -b&-c\\
0&0&0&0& -a\\
\end{array}
\right]$$ 
We have hence found the well known fact that the stabilizer of the complete standard flag is the Borel subgroup in both the symplectic and orthogonal setup. The Borel subgroups therefore equal the intersection of $G$ with the standard Borel of upper-triangular matrices in $\GL_n$. 
\end{example}
Clearly, this example generalizes to larger $n$ in a straight forward manner.

 \subsection{Translation}
The translation from  $B$--orbits in $\N(2)$ to the representation theory of a symmetric quiver is based on a theorem on associated fibre bundles which we recall for the convenience of the reader. Its origin can be found in \cite{Se}.
\begin{theorem}\label{Thm:BRBundle}
Let $G$ be an algebraic group, let $X$ and $Y$ be $G$--varieties, and let $\pi:X\rightarrow Y$ be a $G$--equivariant morphism. 
Assume that $Y$ is a single $G$--orbit, $Y = Gy_0$. Define $H := \textrm{Stab}_G(y_0) = \{g\in G|\, g\cdot y_0 = y_0\}$ and $F:=\pi^{-1}(y_0)$. Then $X$ is isomorphic to the associated fibre bundle $G\times^H F$, and 
the embedding $\iota:F\rightarrow X$ induces a bijection between $H$--orbits in $F$ and $G$--orbits in $X$ preserving orbit closures.
\end{theorem}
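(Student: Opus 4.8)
The plan is to construct an explicit $G$-equivariant isomorphism $G\times^H F\xrightarrow{\sim} X$ lying over the identity of $Y$, and then to read off the orbit correspondence from it. Recall that $G\times^H F$ denotes the geometric quotient of $G\times F$ by the free $H$-action $h\cdot(g,f)=(gh^{-1},h\cdot f)$; this quotient exists as a variety because $\pi\colon G\to G/H\cong Y$ is a principal $H$-bundle, and $G\times^H F$ carries the residual $G$-action $g'\cdot[g,f]=[g'g,f]$ together with the projection $[g,f]\mapsto g\cdot y_0$ to $Y$. I will identify $F$ with the fibre $[e,F]$ over $y_0$, which is exactly the image of the embedding $\iota$.

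First I would define $\Phi\colon G\times F\to X$ by $\Phi(g,f)=g\cdot f$. Since $\pi(g\cdot f)=g\cdot\pi(f)=g\cdot y_0$, this is compatible with the projections to $Y$, and $\Phi$ is constant on $H$-orbits because $\Phi(gh^{-1},h\cdot f)=gh^{-1}\cdot(h\cdot f)=g\cdot f$. Hence $\Phi$ descends to a $G$-equivariant morphism $\bar\Phi\colon G\times^H F\to X$ over $Y$. Bijectivity is then elementary: surjectivity holds because any $x\in X$ satisfies $\pi(x)=g\cdot y_0$ for some $g$, whence $g^{-1}\cdot x\in\pi^{-1}(y_0)=F$ and $x=\bar\Phi[g,g^{-1}\cdot x]$; injectivity follows because $g_1\cdot f_1=g_2\cdot f_2$ forces $(g_2^{-1}g_1)\cdot y_0=y_0$, so $h:=g_2^{-1}g_1\in H$ and $(g_1,f_1)=(g_2h,\,h^{-1}\cdot f_2)$ lies in the $H$-orbit of $(g_2,f_2)$.

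The step I expect to be the main obstacle is upgrading the bijective morphism $\bar\Phi$ to an \emph{isomorphism of varieties}, since in general a bijective morphism need not be an isomorphism. Here I would exploit that $G\to Y=G/H$ is a principal $H$-bundle and hence admits local sections $s_i\colon U_i\to G$ over a (Zariski where available, otherwise étale) cover $\{U_i\}$ of $Y$, with $s_i(y)\cdot y_0=y$. These sections trivialize both sides over each $U_i$: the assignment $x\mapsto\bigl(\pi(x),\,s_i(\pi(x))^{-1}\cdot x\bigr)$ identifies $\pi^{-1}(U_i)$ with $U_i\times F$ (note $s_i(\pi(x))^{-1}\cdot x\in F$, since $\pi$ sends it to $y_0$), and likewise $G\times^H F$ trivializes over $U_i$ via $(y,f)\mapsto[s_i(y),f]$. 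Under these identifications $\bar\Phi$ becomes the identity of $U_i\times F$; gluing the resulting local inverses (equivalently, invoking faithfully flat descent of the isomorphism along the trivializing cover) shows $\bar\Phi$ is an isomorphism. From now on I identify $X$ with $G\times^H F$.

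Finally I would deduce the orbit statement. For $f\in F$ the identity $\pi(g\cdot f)=g\cdot y_0$ shows that $g\cdot f\in F$ if and only if $g\in H$, so $G\cdot f\cap F=H\cdot f$; moreover every $G$-orbit meets $F$ by the surjectivity argument above. Hence $\Orb\mapsto\Orb\cap F$ and $H\cdot f\mapsto G\cdot f$ are mutually inverse bijections between $G$-orbits in $X$ and $H$-orbits in $F$. For the preservation of closures I would use that the quotient map $q\colon G\times F\to G\times^H F$ is open and that $\overline{H\cdot f}\subseteq F$ is $H$-stable: then $q^{-1}\bigl(q(G\times\overline{H\cdot f})\bigr)=G\times\overline{H\cdot f}$ is closed, so $q\bigl(G\times\overline{H\cdot f}\bigr)$ is closed, $G$-stable, contains $G\cdot f$, and by continuity is contained in $\overline{G\cdot f}$; thus it equals $\overline{G\cdot f}$. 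Intersecting with the fibre $F$ gives $\overline{G\cdot f}\cap F=\overline{H\cdot f}$. Consequently $G\cdot f'\subseteq\overline{G\cdot f}$ if and only if $H\cdot f'\subseteq\overline{H\cdot f}$, which is precisely the asserted compatibility of the bijection with orbit closures.
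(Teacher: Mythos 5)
The paper does not actually prove this statement: it is recalled from the literature (its origin is attributed to Serre \cite{Se}) and used as a black box, so there is no in-paper argument to compare yours against. Judged on its own, your proof is the standard one and is essentially correct: the descent of $(g,f)\mapsto g\cdot f$ to a bijective $G$-equivariant morphism $\bar\Phi\colon G\times^H F\to X$, the local-trivialization argument upgrading $\bar\Phi$ to an isomorphism, and the computation $\overline{G\cdot f}\cap F=\overline{H\cdot f}$ yielding the closure-preserving orbit bijection are all sound. Two points deserve flagging. First, you correctly identify the real obstacle --- a bijective morphism need not be an isomorphism --- and your fix (étale-local sections of $G\to G/H$ plus faithfully flat descent) is the right one; note that in the paper's actual applications $H$ is a Borel or parabolic stabilizer of a flag, hence a special group in Serre's sense, so Zariski-local sections genuinely exist and the descent step can be dispensed with. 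Second, you silently identify $Y$ with $G/H$ as varieties; this uses separability of the orbit map $G/H\to Gy_0$, which is automatic over $\mathbf{C}$ (the paper's setting) but would fail in positive characteristic, where only a bijective purely inseparable comparison map is available. Neither point is a gap in the context of this paper.
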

%\begin{proof}
%Every $G$-orbit in $X$ contains a point $p$ of $F$, i.e. $G\cdot p\cap F\neq\emptyset$; moreover $G\cdot p\cap F=H\cdot p$. The bijection is hence given by $G\cdot p\mapsto G\cdot p\cap F=H\cdot p$.
%\end{proof}
\begin{corollary}\label{Cor:Stabilizer}
With the notation of Theorem \ref{Thm:BRBundle}, given a point $p\in F$, we have $\stab_H(p)=\stab_G(p)$
\end{corollary}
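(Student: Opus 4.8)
The plan is to establish the two inclusions $\stab_H(p)\subseteq\stab_G(p)$ and $\stab_G(p)\subseteq\stab_H(p)$ separately; the first is immediate, and all the content lies in the second.

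For the easy inclusion, I would observe that $H\subseteq G$ by definition (recall $H=\stab_G(y_0)$), so any $h\in H$ fixing $p$ is in particular an element of $G$ fixing $p$. More precisely $\stab_H(p)=\stab_G(p)\cap H$, and hence the inclusion $\stab_H(p)\subseteq\stab_G(p)$ holds automatically.

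For the reverse inclusion, the key point is to exploit the $G$--equivariance of $\pi$ together with the fact that $p\in F=\pi^{-1}(y_0)$, i.e. $\pi(p)=y_0$. Suppose $g\in\stab_G(p)$, so that $g\cdot p=p$. Applying $\pi$ and using equivariance gives $y_0=\pi(p)=\pi(g\cdot p)=g\cdot\pi(p)=g\cdot y_0$. Hence $g$ fixes $y_0$, that is, $g\in\stab_G(y_0)=H$; since $g$ also fixes $p$, we conclude $g\in\stab_H(p)$. This establishes $\stab_G(p)\subseteq\stab_H(p)$, and combining the two inclusions yields the claimed equality.

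I do not anticipate any genuine obstacle: the statement is a purely formal consequence of the equivariance of $\pi$ and the defining property of the fibre $F$, and it requires no input from Theorem~\ref{Thm:BRBundle} beyond the hypotheses recalled in its statement. In particular, the associated fibre bundle isomorphism $X\cong G\times^H F$ plays no role in the argument; only the equivariance of $\pi$ and the equality $\pi(p)=y_0$ are used.
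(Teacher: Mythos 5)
Your proof is correct and follows essentially the same route as the paper: the easy inclusion from $H\subseteq G$, and the reverse inclusion from the interaction of the $G$--equivariance of $\pi$ with the fibre condition $\pi(p)=y_0$. If anything, your direct argument ($g\cdot p=p$ implies $g\cdot y_0=y_0$) is slightly tighter than the paper's, which deduces the reverse inclusion from the identity $H\cdot p=G\cdot p\cap F$ — a fact that is itself justified by exactly the equivariance computation you wrote out.
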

\begin{proof}
Since $H$ is  a subgroup of $G$, $\textrm{Stab}_H(p)\subseteq \textrm{Stab}_G(p)$; viceversa, since $H\cdot p=G\cdot p\cap F$, the reversed inclusion also holds. 
\end{proof}

 In view of Theorem \ref{Thm:BRBundle}, we can now prove the following key lemma, analogous to \cite[Lemma~3.1]{B2}.
\begin{lemma}\label{lem:Bijection}
There is a bijection between isoclasses of symplectic/orthogonal $\A(l)$--representations in $\SRep(\A(l), \mathbf{d}_\bullet)^0$ and symplectic/orthogonal $B$--orbits in $\N(2)$. This bijection respects orbit closure relations and dimensions of stabilizers. 
\end{lemma}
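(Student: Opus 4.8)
The plan is to realize the bijection through the associated fibre bundle machinery of Theorem~\ref{Thm:BRBundle}, applied to a carefully chosen equivariant morphism. First I would set up the correct $G$--variety picture. Let $G=\Sym(n)$ act on the variety $\Fa$ of complete isotropic flags in $V$ compatible with the form $F$ (i.e.\ the flag variety $G/B$), and consider the total space $X=\{(x,\underline{F})\in\N(2)\times\Fa\mid x(F_i)\subseteq F_i \text{ for all } i\}$ of $2$--nilpotent elements together with a stabilized flag, with $\pi:X\to\Fa$ the projection onto the flag. Since $G$ acts transitively on $\Fa$ with stabilizer of the standard flag equal to $B$, Theorem~\ref{Thm:BRBundle} gives $X\cong G\times^{B}F$ where $F=\pi^{-1}(\underline{F}_0)$ is the fibre over the standard flag, and the embedding $\iota:F\hookrightarrow X$ induces a closure--preserving bijection between $B$--orbits in $F$ and $G$--orbits in $X$. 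The fibre $F$ is precisely the set of $2$--nilpotent elements preserving the standard flag, so $B$--orbits in $F$ are exactly the $B$--orbits in $\N(2)$ we wish to parametrize.

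Next I would identify $G$--orbits in $X$ with isoclasses of symmetric representations in $\SRep(\A(l),\mathbf{d}_\bullet)^0$. The idea is that a point of $X$ is a $2$--nilpotent endomorphism $x$ together with a compatible isotropic flag; encoding the flag as the chain of inclusions $K^{1}\subseteq K^{2}\subseteq\cdots\subseteq K^{l}\subseteq V$ and its dual chain realizes the arrows $a_i,a_i^\ast$ of $Q_l$ as the (maximal--rank) inclusions, the loop $\alpha$ at $\omega$ as the $2$--nilpotent map $x$ itself, and the form $F$ on $V=\epsilon_\omega M$ as the non--degenerate bilinear form making $M$ symmetric. The relation $\alpha^2=0$ encodes $x^2=0$, and $a_l^\ast a_l=0$ encodes that $x$ kills the maximal isotropic subspace appropriately; condition \eqref{Eq:ConditionSymmetricRep} forces exactly the compatibility between $x$ and $F$ that makes $x$ lie in $\fg=\fsym(V)$ rather than merely in $\gl_n$. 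The openness condition ``maximal rank of the $a_i,a_i^\ast$'' is what selects genuine flags (injective inclusions) inside the representation variety, matching the open subset $\SR_{\mathbf{d}_\bullet}(\A(l))^0$. Under this dictionary, two points of $X$ lie in the same $G$--orbit if and only if the associated representations are isomorphic in $\SRep(\A(l),\mathbf{d}_\bullet)$, because a $G=\Sym(n)$--change of basis on $V$ that carries one configuration to the other is precisely an isomorphism of symmetric representations (using the description of $\Aut_{\SRep(K\Q)}(M)$ given in Section~3); this is where Example~\ref{ex:Borel} serves as the template showing that the symmetric stabilizer of the standard flag representation is $B$.

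Composing the two bijections yields the desired correspondence between $B$--orbits in $\N(2)$ and isoclasses in $\SRep(\A(l),\mathbf{d}_\bullet)^0$. Closure relations are preserved at both stages: the fibre--bundle bijection preserves orbit closures by Theorem~\ref{Thm:BRBundle}, and the orbit--isoclass dictionary is a bijection of the same underlying variety so it transports the Zariski topology faithfully. For the statement about stabilizers, I would invoke Corollary~\ref{Cor:Stabilizer}: for a point $p\in F$ the fibre--bundle structure gives $\stab_B(p)=\stab_G(p)$, so the dimension of the $B$--stabilizer of $x\in\N(2)$ equals the dimension of the $G$--stabilizer of the corresponding point in $X$, which in turn equals $\dim\Aut_{\SRep(\A(l))}(M_x)$, i.e.\ the dimension of the symmetric automorphism group of the representation.

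I expect the main obstacle to be the faithful verification that the translation is a genuine equivalence of \emph{symmetric} structures, not merely of underlying quiver representations. Concretely, one must check that condition \eqref{Eq:ConditionSymmetricRep} on the arrows (forcing $a_i$ to act as $+1$ and $a_i^\ast$ as $-1$, as in Example~\ref{ex:Borel}) together with the non--degenerate form on $\epsilon_\omega M$ reconstructs exactly the datum of a $2$--nilpotent element of $\fsym(V)$ preserving an isotropic flag, with no spurious representations and no collapsing of distinct orbits; and symmetrically that every such representation arises from a point of $X$. The delicate point is matching the symplectic/orthogonal \emph{sign} conventions of \eqref{Eq:SymEndoCond1}--\eqref{Eq:SymEndoCond2} so that $\Aut_{\SRep}(M)$ is identified on the nose with $\stab_G(x)$ rather than with a conjugate or a form--twisted variant; once this is pinned down, the remaining verifications (that $\alpha^2=0,\;a_l^\ast a_l=0$ correspond to $x^2=0$ and the isotropy relation, and that orbits match isoclasses) are formal.
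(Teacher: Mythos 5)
Your overall architecture---apply Theorem~\ref{Thm:BRBundle} to a projection onto the isotropic flag variety, identify the fibre over the base point with the variety carrying the $B$-action, and match the total space with $\SR_{\mathbf{d}_\bullet}(\A(l))^0$---is the paper's: the paper runs exactly this argument with the $\GL_{\sym}$-equivariant map $\SR_{\mathbf{d}_\bullet}(\A(l))^0\twoheadrightarrow\SR_{\mathbf{d}_\bullet}(\widetilde{\A(l)})^0$ that forgets the loop. However, there is a genuine error in your definition of the total space: you impose the incidence condition $x(F_i)\subseteq F_i$. With that condition the fibre over the standard flag is not $\N(2)$ but the set of $2$-nilpotent elements of $\fg$ preserving the standard isotropic flag; since an element of $\fg$ preserving each $V_i$ also preserves each $V_i^{\perp}$, such an $x$ is upper triangular, and a $2$-nilpotent upper triangular matrix has zero diagonal, so this fibre is the nilradical piece $\mathfrak{n}(2)=\N(2)\cap\fn$. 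Your sentence ``so $B$-orbits in $F$ are exactly the $B$-orbits in $\N(2)$'' is therefore a non sequitur: as written, your bundle argument parametrizes only the orbits treated in the paper's final section and misses, for instance, the orbit of $E_{j,i}-E_{i^\ast,j^\ast}$ for $i<j$, which occurs in the table of Theorem~\ref{Thm:symp_param_B} (for $\fsp_4$ this cuts the count from $13$ orbits to $6$).

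The same spurious condition breaks the second half of your argument: in $\A(l)=KQ_l/(\alpha^2,\,a_l^\ast a_l)$ there is no relation tying $M_\alpha$ to the images of the flag arrows (the relation $a_l^\ast a_l=0$ only encodes isotropy of $V_l$), so a point of $\SR_{\mathbf{d}_\bullet}(\A(l))^0$ is a flag together with an \emph{arbitrary} $2$-nilpotent element of $\fsym(V)$, and your $X$ is only a proper closed subvariety of the relevant space. The fix is simply to delete the incidence condition, i.e.\ take $X=\N(2)\times\Fa$ with the diagonal $\Sym(n)$-action (equivalently, work with the representation variety and the group $\GL_{\sym}$ as the paper does; the extra $\GL(d_i)$ factors are absorbed when passing from flag representations to flags). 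With that correction, your use of Corollary~\ref{Cor:Stabilizer} for the stabilizer dimensions, the closure-preservation statement, and the remaining sign bookkeeping (which is precisely the content of Example~\ref{ex:Borel}) all go through and the argument coincides with the paper's.
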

\begin{proof}
Let $\widetilde{\Q_l}$ be the quiver obtained from $\Q_l$ by removing the loop $\alpha$ and let $\widetilde{\A(l)}$ be the corresponding symmetric algebra  (also remove the relation $\alpha^2$). By defining $\SR_{\mathbf{d}_\bullet}(\widetilde{\A(l)})^0$ analougously to $\SR_{\mathbf{d}_\bullet}(\A(l))^0$, we see that this variety is acted upon transitively by $\GL_{\sym}$ and we denote the representation which is given by the complete standard flag by $M_0$; this is a generating point.  The embedding $\widetilde{\A(l)}\subset\A(l)$ induces a  $\GL_{\sym}$-equivariant projection
$$
\xymatrix{
\pi:\SR_{\mathbf{d}_\bullet}(\A(l))^0\ar@{->>}[r]&\SR_{\mathbf{d}_\bullet}(\widetilde{\A(l)})^0}
$$
which is given by forgetting the linear map associated with the loop $\alpha$. The fiber of $\pi$ equals the variety $\N(2)$.

As we have seen before, the stabilizer of the symplectic/orthogonal representation $M_0$ is isomorphic to the Borel subgroup $B$ of the symplectic/orthogonal group. Thus,  Theorem \ref{Thm:BRBundle} proves the claim.
\end{proof}
We are hence left to classify the isomorphism classes of symplectic/orthogonal representations of $\A(l)$ of dimension vector $\mathbf{d}_\bullet$ with maximal rank maps, which in view of Krull--Remak--Schmidt's theorem is analogous to classifying the unique decompositions of  elements of $\SRep(\A(l), \mathbf{d}_\bullet)^0$ into indecomposable symplectic/orthogonal representations (up to symmetric isomorphism). Let $M$ and $M'$ be two points of $\SRep(\A(l), \mathbf{d}_\bullet)^0$ which are contained in different orbits. Since $\pi(M)=\pi(M')$ under the morphism of the proof of Lemma \ref{lem:Bijection}, the only difference beetween them is given by the action of the loop $\alpha$. This means that the only part of the coefficient quivers of $M$ and $M'$ which differs is the subquiver which represents the loop $\alpha$.

\section[Representation theory of A(l)]{Representation theory of $\A(l)$}\label{sect:repAl}
In this section, we look at the (symmetric) representation theory of the algebra $\A(l)$ corresponding to the symmetric quiver $\Q_l$. With these considerations, we are able to prove explicit parametrizations of the Borel-orbits in $\N(2)$ in Section \ref{sect:paramOrbits}.
\subsection[Indecomposable symmetric A(l)--modules]{Indecomposable symmetric $\A(l)$--modules}\label{Subsec:Indecomposables}
The following proposition follows from \cite[Section~3]{BuRi} by noticing that there are no band modules. 
\begin{proposition}
The algebra $\A(l)$ is a string algebra of finite representation type. In particular, the indecomposable $\A(l)$--modules are string modules and their isoclasses are parametrized by words with letters in the arrows of $\Q_l$ and their inverses, avoiding relations. 
\end{proposition}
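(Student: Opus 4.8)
The plan is to verify directly that $\A(l)=K\Q_l/(\alpha^2,\,a_l^\ast a_l)$ satisfies the combinatorial axioms of a \emph{string algebra} in the sense of Butler--Ringel \cite{BuRi}, and then to rule out band modules, so that their general classification of indecomposables over string algebras specializes to the statement above. First I would record that the defining ideal $I=(\alpha^2, a_l^\ast a_l)$ is admissible and monomial: both generators are paths of length two, so $I\subseteq\rad(K\Q_l)^2$, and since the only oriented cycle in $\Q_l$ is the loop $\alpha$ whose square lies in $I$, every sufficiently long path vanishes, giving $\rad(K\Q_l)^s\subseteq I$ for $s\gg 0$; thus $\A(l)$ is finite-dimensional.

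Next I would check the two local conditions. For (S1), at most two arrows start and at most two end at each vertex: this is immediate away from $\omega$, where each vertex lies on the linear ``spine'' and carries at most one incoming and one outgoing arrow, while at $\omega$ the incoming arrows are $a_l$ and $\alpha$ and the outgoing arrows are $\alpha$ and $a_l^\ast$, so exactly two of each. For (S2), for every arrow $\beta$ there is at most one arrow $\gamma$ with $\gamma\beta\notin I$ and at most one $\delta$ with $\beta\delta\notin I$. The only arrows admitting two formal continuations or two formal predecessors are those meeting $\omega$, and a direct check settles them: among the continuations of $a_l$ one has $a_l^\ast a_l\in I$ while $\alpha a_l\notin I$; among the continuations of $\alpha$ one has $\alpha^2\in I$ while $a_l^\ast\alpha\notin I$; and among the predecessors of $a_l^\ast$ one has $a_l^\ast a_l\in I$ while $a_l^\ast\alpha\notin I$. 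Hence in each relevant pair exactly one composite survives, and all remaining arrows are unambiguous, so $\A(l)$ is a string algebra.

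Having established this, Butler--Ringel gives that every indecomposable is either a string module or a band module, the string modules being parametrized by reduced walks in the letters $a_i^{\pm1}$, $(a_i^\ast)^{\pm1}$, $\alpha^{\pm1}$ that avoid $I$ and its formal inverse. The heart of the argument---and the step I expect to be the main obstacle---is to show there are no bands. Here I would exploit that the underlying graph of $\Q_l$ is a path with a single loop attached at $\omega$: any reduced closed walk cannot make an excursion into the spine and return, since that would backtrack along some $a_i$, so up to rotation every cyclic string is a power of $\alpha$ or of $\alpha^{-1}$. Because $\alpha^2\in I$, the walk $\alpha^2$ is not a string, hence no power $\alpha^m$ with $m\ge 2$ is a string; therefore $\alpha$ is not a band and no bands exist at all.

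Finally, the absence of bands together with the fact that $\alpha$ can occur at most once in any relation-avoiding reduced walk---the spine forces such walks to be monotone and hence of bounded length---shows there are only finitely many strings. It follows that $\A(l)$ is of finite representation type and that its indecomposables are precisely the string modules, parametrized by the relation-avoiding words described in the statement.
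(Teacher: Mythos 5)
Your argument is correct and takes the same route as the paper, whose entire proof is the single remark that the statement follows from Butler--Ringel once one notices there are no band modules; you have simply filled in the verification of the string-algebra axioms, the exclusion of bands, and the finiteness count that the paper leaves implicit. One cosmetic caveat: strings such as the one underlying $D_{ij}^+$ are not globally monotone on the spine (they ascend to $\omega$, cross $\alpha$, and descend again), but each spine segment is monotone and $\alpha$ occurs at most once, which is all your boundedness argument actually needs.
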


Let us give names to the indecomposable $\A(l)$--modules (where $l+1:=\omega$). 
\begin{itemize}
\item[$M_{ij}$:] For $1\leq i\leq j\leq l+1$, we denote by $M_{ij}$ the string module associated with the word $a_i\cdots a_{j-1}$, i.e. it is the indecomposable module supported on vertices $i,i+1,\cdots, j$; its coefficient quiver is given by
$$
\xymatrix{
i\ar[r]&i+1\ar[r]&\cdots\ar[r]&j-1\ar[r]&j
}
$$
\item[$M_{ij}^\ast$:] For $1\leq i\leq j\leq l+1$, we denote by $M_{ij}^\ast$ the string module associated with the word $a_{j-1}^{\ast}\cdots a_i^\ast$, i.e. it is the indecomposable module supported on vertices $j^\ast,(j-1)^\ast,\cdots, i^\ast$; its coefficient quiver is given by
$$
\xymatrix{
j^\ast\ar[r]&(j-1)^\ast\ar[r]&\cdots\ar[r]&(i+1)^\ast\ar[r]&i^\ast
}
$$

\item[$D_{ij}^+$:] For $1\leq i\leq j\leq l+1$, we denote by $D_{ij}^+$ the indecomposable associated with the word $a_ia_{i+1}\cdots a_l\alpha^- a_l^-a_{l-1}^-\cdots a_j^-$; its coefficient quiver has the following form
$$
\xymatrix{
i\ar[r]&i+1\ar[r]&\cdots\ar[r]&j\ar[r]&j+1\ar[r]&\cdots\ar[r]&l\ar[r]&\omega\\
&&&j\ar[r]&j+1\ar[r]&\cdots\ar[r]&l\ar[r]&\omega\ar[u]
}
$$
\item[$D_{ij}^-$:] For $1\leq i<j\leq l+1$ we denote by $D_{ij}^-$ the indecomposable associated with the word $a_ia_{i+1}\cdots a_l\alpha a_l^-a_{l-1}^-\cdots a_j^-$; its coefficient quiver is given by
$$
\xymatrix{
i\ar[r]&i+1\ar[r]&\cdots\ar[r]&j\ar[r]&j+1\ar[r]&\cdots\ar[r]&l\ar[r]&\omega\ar[d]\\
&&&j\ar[r]&j+1\ar[r]&\cdots\ar[r]&l\ar[r]&\omega
}
$$
%In particular, $D_{k+1,k+1}^+$ the indecomposable module associated with the word $\alpha$ is two dimensional and concentrated at vertex $\omega$; its coefficient quiver is given by
%$$
%\xymatrix{\omega\ar[d]\\\omega}
%$$

\item[$C_{ij}^+$:] For $1\leq i\leq j\leq l+1$ we denote by $C_{ij}^+$ the indecomposable associated with the word $(a_j^\ast)^- (a_{j-1}^\ast)^-\cdots (a_l^\ast)^-\alpha^- a_l^\ast a_{l-1}^\ast\cdots a_i^\ast$; its coefficient quiver is given by
$$
\xymatrix{
\omega\ar[d]\ar[r]&l^\ast\ar[r]&\cdots\ar[r]&j^\ast\ar[r]&(j-1)^\ast\ar[r]&\cdots\ar[r]&(i+1)^\ast\ar[r]&i^\ast\\
\omega\ar[r]&l^\ast\ar[r]&\cdots\ar[r]&j^\ast&&&&
}
$$
\item[$C_{ij}^-$:] For $1\leq i< j\leq l+1$ we denote by $C_{ij}^-$  the indecomposable module associated with the word $(a_j^\ast)^- (a_{j-1}^\ast)^-\cdots (a_l^\ast)^-\alpha\, a_l^\ast a_{l-1}^\ast\cdots a_i^\ast$; its coefficient quiver is given by
$$
\xymatrix{
\omega\ar[r]&l^\ast\ar[r]&\cdots\ar[r]&j^\ast\ar[r]&(j-1)^\ast\ar[r]&\cdots\ar[r]&(i+1)^\ast\ar[r]&i^\ast\\
\omega\ar[u]\ar[r]&l^\ast\ar[r]&\cdots\ar[r]&j^\ast&&&&
}$$

\item[$Z_{ij}^+$:] For $1\leq i, j\leq l$ we denote by $Z_{ij}^+$ the indecomposable associated with the word $a_ia_{i+1}\cdots\alpha_l\alpha^- a_l^\ast\cdots a_j^\ast$; its coefficient quiver is given by
$$
\xymatrix{
i\ar[r]&i+1\ar[r]&\cdots\ar[r]&l\ar[r]&\omega&&&\\
&&&&\omega\ar[u]\ar[r]&l^\ast\ar[r]&\cdots\ar[r]&j^\ast
}
$$
\item[$Z_{ij}^-$:] For $1\leq i, j\leq l$ we denote by $Z_{ij}^-$ the indecomposable associated with the word $a_ia_{i+1}\cdots a_l\alpha a_l^\ast\cdots a_j^\ast$; its coefficient quiver is given by
$$
\xymatrix{
i\ar[r]&i+1\ar[r]&\cdots\ar[r]&l\ar[r]&\omega\ar[d]&&&\\
&&&&\omega\ar[r]&l^\ast\ar[r]&\cdots\ar[r]&j^\ast
}
$$
\end{itemize}
\begin{remark}\label{Rem:Scalars}
All the modules above are non--isomorphic to each other, apart from $D_{l+1,l+1}^+\simeq C_{l+1,l+1}^+$ and $M_{l+1,l+1}\simeq M_{l+1,l+1}^\ast$.
\end{remark}
%\newline

We consider the involution $\sigma$ of $\Q_l$, which sends every vertex $i$ to $i^\ast$, every arrow $a$ to $a^\ast$ and which fixes $\omega$ and $\alpha$ (here we use the convention that $(-)^{\ast\ast}=(-)$). Then $(\Q_l,\sigma)$ is a symmetric quiver and we can consider symmetric representations of $\A(l)$. The involution $\sigma$ induces a duality on the category of representations of $\A(l)$ that we denote by $\nabla$ (as in \cite{DW}). 
\begin{convention}
Given an indecomposable $\A(l)$--module $M$, we need to choose carefully the linear maps. Since we often work with its coefficient quiver, we fix one and for all a convention about these:\\
The arrows  of the coefficient quiver of $M$ colored with $a_1,\cdots, a_l$ act as $1$, while  the arrows   colored with $a_l^\ast,\cdots, a_1^\ast$ act as $-1$. 

Every pair of two arrows $\xymatrix{\omega_i\ar^{\alpha_1}[r]&\omega_j}$ and $\xymatrix{\omega_{j^\ast}\ar^{\alpha_2}[r]&\omega_{i^\ast}}$ colored with $\omega$ (if they exist) has to satisfy the following conditions:
\begin{itemize}
\item For $V$ to be orthogonal, $\alpha_1$ acts as $1$ and $\alpha_2$ as $-1$.
\item  For $V$ to be symplectic, if $1\leq i,j\leq l$ or $1\leq i^\ast,j^\ast\leq l$, then $\alpha_1$ acts as $1$ and $\alpha_2$ as $-1$, otherwise $\alpha_1$ and $\alpha_2$ both act as $1$.
\end{itemize}
\end{convention}

\begin{proposition}\label{Prop:Nabla}
With the above notation, we have: 
$\nabla M_{i,j}\simeq M_{i,j}^\ast$, $\nabla D_{i,j}^+\simeq C_{i,j}^+$, $\nabla D_{i,j}^-\simeq C_{i,j}^-$, $\nabla Z_{i,j}^+\simeq Z_{j,i}^+$, $\nabla Z_{i,j}^-\simeq Z_{j,i}^-$. In particular,  $\nabla M_{l+1,l+1}\simeq M_{l+1,l+1}$ and $\nabla D_{l+1,l+1}^+\simeq D_{l+1,l+1}^+$
\end{proposition}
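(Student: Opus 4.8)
The plan is to reduce the statement to a purely combinatorial computation with coefficient quivers, exploiting that every indecomposable of $\A(l)$ is a string module. First I would record the explicit description of the duality $\nabla$ coming from $\sigma$: it is the composite of the ordinary $K$-linear duality $D=\Hom_K(-,K)$ with the relabelling induced by $\sigma$, so that $(\nabla M)_p=(M_{\sigma(p)})^\ast$ and every arrow acts through the transpose of $M_{\sigma(\alpha)}$. As a composite of the exact contravariant duality $D$ with a relabelling equivalence, $\nabla$ is an exact contravariant involutive duality (see \cite{DW}) which preserves indecomposability; hence it sends string modules to string modules and it suffices to evaluate it on each of the families $M_{ij}$, $M_{ij}^\ast$, $D_{ij}^\pm$, $C_{ij}^\pm$, $Z_{ij}^\pm$ listed above.

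The key observation is the effect of $\nabla$ at the level of coefficient quivers: it reverses every arrow (transposition), replaces each vertex $p$ by $\sigma(p)$ and each arrow label by its $\sigma$-image, so that $a_k\leftrightarrow a_k^\ast$ while $\omega$ and the loop $\alpha$ are fixed. Equivalently, on the defining word $\nabla$ reverses the word, formally inverts each letter and applies $\sigma$. With this rule in hand the verifications are immediate. For $M_{ij}$ the chain $i\to i+1\to\cdots\to j$ with labels $a_i,\dots,a_{j-1}$ is carried to the chain $j^\ast\to(j-1)^\ast\to\cdots\to i^\ast$ with labels $a_{j-1}^\ast,\dots,a_i^\ast$, which is exactly $M_{ij}^\ast$. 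For $D_{ij}^+$ the sink $\omega$ becomes a source, the two unstarred arms $i\to\cdots\to l\to\omega$ and $j\to\cdots\to l\to\omega$ become the starred arms $\omega\to l^\ast\to\cdots\to i^\ast$ and $\omega\to l^\ast\to\cdots\to j^\ast$, and the $\sigma$-fixed loop $\alpha$ reverses the connecting vertical arrow; the result is precisely the coefficient quiver of $C_{ij}^+$, and the same computation with the opposite orientation of the loop gives $\nabla D_{ij}^-\simeq C_{ij}^-$. For $Z_{ij}^\pm$ one arm is unstarred and the other starred, so $\sigma$ interchanges the roles of the two arms, turning the unstarred arm indexed by $i$ into a starred arm indexed by $i$ and the starred arm indexed by $j$ into an unstarred arm indexed by $j$; this is exactly the index swap realizing $Z_{ji}^\pm$.

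The two special cases follow by specialization. Taking $i=j=l+1=\omega$ in the formulas just proved gives $\nabla M_{l+1,l+1}\simeq M_{l+1,l+1}^\ast$ and $\nabla D_{l+1,l+1}^+\simeq C_{l+1,l+1}^+$; by Remark~\ref{Rem:Scalars} the right-hand sides coincide with $M_{l+1,l+1}$ and $D_{l+1,l+1}^+$ respectively, which is the asserted self-duality.

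The hard part will not be the combinatorics but the scalar bookkeeping imposed by the Convention fixing the action of the arrows. Applying $\nabla$ produces transposed maps, which carry the signs prescribed for $a_k$ and $a_k^\ast$ and, around the loop, the sign that distinguishes the orthogonal from the symplectic case; to upgrade the coincidence of underlying strings to a genuine isomorphism one must exhibit a diagonal change of basis normalizing these scalars to the chosen representatives. Since string modules are determined up to isomorphism by their words and all the scalars involved are units, such a rescaling always exists, so the only genuine check is that the $\sigma$-fixed data at $\omega$ and at the loop $\alpha$ are compatible with the duality — precisely the point at which the symplectic-versus-orthogonal sign rule of the Convention must be matched.
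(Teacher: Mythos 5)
Your argument is correct and is essentially the paper's own proof: the paper likewise establishes the proposition by describing the effect of $\nabla$ on coefficient quivers (reverse all arrows, change signs, reflect through the middle vertex $\omega$) and reading off the resulting strings. You simply spell out the case-by-case verification and the diagonal rescaling of scalars, which the paper leaves implicit.
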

\begin{proof}
Let M be an indecomposable module as listed above. The coefficient quiver of the dual $\nabla M$ of $M$ is obtained from the coefficient quiver of $M$ by reversing all the arrows, changing their sign and then making a reflection through the middle vertex $\omega=l+1$. 
\end{proof}
Thus, we obtain the following classification lemma.

\begin{lemma}
The symplectic indecomposable representations of $\A(l)$ are $Z_{ii}^\pm$, $M_{ij}\oplus M_{ij}^\ast$, $D_{ij}^\pm\oplus C_{ij}^\pm$ (for $(i,j)\neq (l+1,l+1)$), $D_{l+1,l+1}^+$ and $Z_{ij}^\pm\oplus Z_{ji}^\pm$ (for $i\neq j$).\\[1ex]
The orthogonal indecomposable representations of $\A(l)$ are $M_{ij}\oplus M_{ij}^\ast$, $D_{ij}^\pm\oplus C_{ij}^\pm$, $Z_{ij}^\pm\oplus Z_{ji}^\pm$ and $M_{l+1,l+1}$.
\end{lemma}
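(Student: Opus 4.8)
The plan is to deduce both lists from the general structure theory of symmetric quiver representations, combined with the duality $\nabla$ of Proposition~\ref{Prop:Nabla} (taken from \cite{DW}). The starting point is the standard dichotomy for a Krull--Schmidt category of symmetric representations: every indecomposable symplectic (resp.\ orthogonal) representation of $\A(l)$ is either a \emph{hyperbolic} representation $N\oplus\nabla N$ attached to an indecomposable $\A(l)$--module $N$ with $N\not\simeq\nabla N$ (one for each unordered pair $\{N,\nabla N\}$, and occurring for \emph{both} symmetry types), or it is attached to a \emph{self--dual} indecomposable $N\simeq\nabla N$. For a self--dual $N$ there is a well--defined type $t(N)\in\{\mathrm{orth},\mathrm{symp}\}$, namely the symmetry type of the nondegenerate invariant forms it admits: $N$ itself is then indecomposable of type $t(N)$, whereas for the opposite type the hyperbolic $N\oplus\nabla N\simeq N\oplus N$ is the corresponding indecomposable. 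Granting this, the whole lemma reduces to (i) reading off the $\nabla$--orbits of indecomposables and (ii) computing $t(N)$ for each self--dual $N$.

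For (i) I read the orbits off Proposition~\ref{Prop:Nabla} and Remark~\ref{Rem:Scalars}. Since $\nabla M_{ij}\simeq M_{ij}^\ast$, $\nabla D_{ij}^\pm\simeq C_{ij}^\pm$ and $\nabla Z_{ij}^\pm\simeq Z_{ji}^\pm$, and since the only coincidences among the indecomposables are $D_{l+1,l+1}^+\simeq C_{l+1,l+1}^+$ and $M_{l+1,l+1}\simeq M_{l+1,l+1}^\ast$, the self--dual indecomposables are exactly $M_{l+1,l+1}$, $D_{l+1,l+1}^+$ and $Z_{ii}^\pm$. All remaining indecomposables fall into genuine two--element orbits $\{M_{ij},M_{ij}^\ast\}$ (with $(i,j)\neq(l+1,l+1)$), $\{D_{ij}^\pm,C_{ij}^\pm\}$ (with $(i,j)\neq(l+1,l+1)$ for the sign $+$, and $i<j$ for the sign $-$) and $\{Z_{ij}^\pm,Z_{ji}^\pm\}$ (with $i\neq j$); these furnish the hyperbolic summands common to both columns.

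For (ii) the key simplification is that the vertex $\omega$ is fixed by $\sigma$, so by \eqref{Eq:ConditionBilinearForm} the space $M_\omega$ is paired only with itself; hence any nondegenerate global form is already nondegenerate on $M_\omega$, and $t(N)$ is detected by its restriction there subject to the compatibility \eqref{Eq:ConditionSymmetricRep} with the loop $\alpha$. For $M_{l+1,l+1}$ one has $\dim M_\omega=1$, where a nondegenerate form is forced to be symmetric, so $t(M_{l+1,l+1})=\mathrm{orth}$. For $D_{l+1,l+1}^+$ and $Z_{ii}^\pm$ one has $\dim M_\omega=2$ with $\alpha|_{M_\omega}$ a single nilpotent Jordan block; writing the Gram matrix $\beta$ of the restricted form, relation \eqref{Eq:ConditionSymmetricRep} becomes $\presuper{T\!}{\alpha}\beta+\beta\alpha=0$, and a direct check shows that every \emph{symmetric} solution $\beta$ is degenerate while the \emph{skew} solutions are nondegenerate, giving $t(D_{l+1,l+1}^+)=t(Z_{ii}^\pm)=\mathrm{symp}$.

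Assembling (i) and (ii) produces exactly the two statements: $M_{l+1,l+1}$ appears alone on the orthogonal side and only doubled, as $M_{l+1,l+1}\oplus M_{l+1,l+1}^\ast$, on the symplectic side, whereas $D_{l+1,l+1}^+$ and $Z_{ii}^\pm$ appear alone on the symplectic side and only doubled, as $D_{l+1,l+1}^+\oplus C_{l+1,l+1}^+$ and $Z_{ii}^\pm\oplus Z_{ii}^\pm$, on the orthogonal side; this is precisely how the index ranges in the two lists differ. I expect the main obstacle to lie not in the bookkeeping but in making the dichotomy of the first paragraph rigorous in our setting: in particular $\End_{\A(l)}(D_{l+1,l+1}^+)$ is two--dimensional, so this module is not a brick and the uniqueness of its type cannot simply be quoted from the brick case, which is why I settle $t(N)$ by the explicit computation on $M_\omega$ above. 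The complementary point, that a hyperbolic $N\oplus N$ of the ``wrong'' type is genuinely indecomposable, follows because any proper nondegenerate subrepresentation would, by Krull--Remak--Schmidt, be isomorphic to $N$ and hence self--dual of that type, contradicting the type computation.
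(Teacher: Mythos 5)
Your proposal is correct and follows essentially the same route as the paper: the paper derives the lemma from Proposition~5.4 (the computation of the $\nabla$--orbits of indecomposables) together with the general hyperbolic/self--dual dichotomy for symmetric representations in the sense of Derksen--Weyman, and the remark immediately following the lemma contains exactly your type computation at the vertex $\omega$ (a $2$--nilpotent endomorphism of a $2$--dimensional orthogonal space must lie in $\fo_2$ and hence vanish, so the self--dual indecomposables $D_{l+1,l+1}^+$ and $Z_{ii}^\pm$ can only be symplectic, while the one--dimensional $M_{l+1,l+1}$ can only be orthogonal). Your explicit verification that the skew solutions of $\presuper{T\!}{\alpha}\beta+\beta\alpha=0$ are the nondegenerate ones is the same check phrased with the form varying instead of the endomorphism, and your care about $D_{l+1,l+1}^+$ not being a brick is a welcome detail the paper leaves implicit.
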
 

In particular, there is only one indecomposable $\A(l)$--modules which can be endowed with an orthogonal structure. 

\begin{remark}
The reason why an indecomposable $\A(l)$--module with symmetric dimension vector cannot be orthogonal, except for the case that it is one--dimensional, is the following: let $M$ be such a (at least two--dimensional) module and let $M_\alpha$ be the linear map associated with the loop $\alpha$. Such a map is a $2$--nilpotent endomorphism of an orthogonal two--dimensional vector space. In order for $M$ to be orthogonal, $M_\alpha$ must  lie in the Lie algebra $\fo_2$ of $\Or_2$ and hence it must be zero, contradicting the fact that $M$ is indecomposable.
\end{remark}

For example, the following representation:
$$
\xymatrix@R=6pt{
1\ar^1[r]&2\ar^1[r]&3\ar^1[r]&4\ar^1[r]&\omega\ar@/^2.5pc/^1[dddd]&&\\
&2\ar^1[r]&3\ar^1[r]&4\ar^1[r]&\omega\ar@/^1pc/^1[d]&&\\
&&3\ar^1[r]&4\ar^1[r]&\omega&&\\
&&&4\ar^1[r]&\omega\ar@/_3pc/_b[dddd]&&\\
&&&&\omega\ar^{-1}[r]&4^\ast&&&\\
&&&&\omega\ar@/_1pc/_{-1}[d]\ar^{-1}[r]&4^\ast\ar^{-1}[r]&3^\ast&&\\
&&&&\omega\ar^{-1}[r]&4^\ast\ar^{-1}[r]&3^\ast\ar^{-1}[r]&2^\ast&\\
&&&&\omega\ar^{-1}[r]&4^\ast\ar^{-1}[r]&3^\ast\ar^{-1}[r]&2^\ast\ar^{-1}[r]&1^\ast\\
}
$$
is symplectic if $b=1$ and orthogonal if $b=-1$. 

\subsection[Auslander--Reiten quiver of A(l)]{Auslander--Reiten quiver of $\A(l)$}\label{ssect:ARQ}

The algebra $\A(l)$ is a string algebra of finite representation--type, that is, it does only admit a finite number of isomorphism classes of indecomposable representations. Its Auslander--Reiten quiver can be obtained in several ways. We prefer to follow the treatment of Butler--Ringel \cite{BoRe} and get the following result. 

\begin{proposition}
The following are the Auslander--Reiten  sequences of $\A(l)$:

\begin{enumerate}
\item Auslander--Reiten sequences starting with $M_{ij}$:

$
\xymatrix{
0\ar[r]&M_{1, \omega}\ar[r]&Z_{1,1}^+\ar[r]&M_{1, \omega}^\ast\ar[r]&0
}
$,

$
\xymatrix{
0\ar[r]&M_{i, \omega}\ar[r]&M_{i-1, \omega}\oplus Z_{i, 1}^+\ar[r]&Z_{i-1, 1}^+\ar[r]&0
}
$, if $i>1$,

$
\xymatrix{
0\ar[r]&M_{i, j}\ar[r]&M_{i, j-1}\oplus M_{i-1, j}\ar[r]&M_{i-1, j-1}\ar[r]&0
}
$, if $i>1$ and $j\leq l$,

$
\xymatrix{
0\ar[r]&M_{i,i}=S_i\ar[r]&M_{i-1, i}\ar[r]&M_{i-1, i-1}\ar[r]&0
}
$, if $i=j>1$.

\item  Auslander--Reiten sequences starting with $M_{ij}^\ast$:

$
\xymatrix{
0\ar[r]&M_{1,j}^\ast\ar[r]&M_{2, j}^\ast\oplus M_{1, j+1}^\ast\ar[r]&M_{2, j+1}^\ast\ar[r]&0
}
$, if $j\leq l-1$,

$
\xymatrix{
0\ar[r]&M_{1,l}^\ast\ar[r]&M_{2,l}^\ast\oplus P_\omega\ar[r]&C_{1, l}^-\ar[r]&0
}$, 

$
\xymatrix{
0\ar[r]&M_{1,\omega}^\ast\ar[r]&M_{2, \omega}^\ast\oplus C_{1,1}^+\ar[r]&C_{1,2}^-\ar[r]&0
}
$, 

$
\xymatrix{
0\ar[r]&M_{i,i}^\ast=S_{i^\ast}\ar[r]&M_{i,i+1}^\ast\ar[r]&M_{i+1,i+1}^\ast\ar[r]&0
}
$, if $i<l$,

$\xymatrix{
0\ar[r]&M_{l,l}^\ast=S_{l^\ast}\ar[r]&C_{1,l}^-\ar[r]&C_{1,\omega}^-\ar[r]&0
}
$,

$
\xymatrix{
0\ar[r]&S_\omega\ar[r]&M_{l,\omega}\oplus C_{1,\omega}^+\ar[r]&Z_{l,1}^+\ar[r]&0
}
$,

$
\xymatrix{
0\ar[r]&M_{i,\omega}^\ast\ar[r]&M_{i+1,\omega}^\ast\oplus C_{1,i}^+\ar[r]&C_{1,i+1}^+\ar[r]&0
}
$, if $1<i<l$,

$
\xymatrix{
0\ar[r]&M_{l,\omega}^\ast\ar[r]&S_\omega\oplus C_{1,l}^+\ar[r]&C_{1,\omega}^+\ar[r]&0
}
$,

$
\xymatrix{
0\ar[r]&M_{i,j}^\ast\ar[r]&M_{i+1,j}^\ast\oplus M_{i,j+1}^\ast\ar[r]&M_{i+1,j+1}^\ast\ar[r]&0
}
$, if $i>1$ and $j<l$.

\item Auslander--Reiten sequences starting with $D_{ij}^+$:

$
\xymatrix{
0\ar[r]&D_{1,\omega}^+\ar[r]&D_{1,l}^+\oplus S_\omega\ar[r]&M_{l,\omega}\ar[r]&0
}
$,

$
\xymatrix{
0\ar[r]&D_{i,\omega}^+\ar[r]&D_{i-1,\omega}^+\oplus D_{i,l}^+\ar[r]&D_{i-1,l}^+\ar[r]&0
}
$, if $1<i\leq l$,

$
\xymatrix{
0\ar[r]&D_{1,j}^+\ar[r]&D_{1,j-1}^+\oplus M_{j,\omega}\ar[r]&M_{j-1,\omega}\ar[r]&0
}
$, if $1<j\leq l$,

$
\xymatrix{
0\ar[r]&D_{i,j}^+\ar[r]&D_{i-1,j}^+\oplus D_{i,j-1}^+\ar[r]&D_{i-1,j-1}^+\ar[r]&0
}
$, if $1<i\leq j\leq l$.

\item Auslander--Reiten sequences starting with $D_{ij}^-$: 

$
\xymatrix{
0\ar[r]&D_{1,\omega}^-\ar[r]&D_{1,l}^-\ar[r]&M_{l,l}=S_l\ar[r]&0
}
$,

$
\xymatrix{
0\ar[r]&D_{i,\omega}^-\ar[r]&D_{i-1,\omega}^-\oplus D_{i,l}^+\ar[r]&D_{i-1,l}^-\ar[r]&0
}
$, if $1<i\leq l$,

$
\xymatrix{
0\ar[r]&D_{i,i}^+\ar[r]&D_{i-1,i}^-\oplus D_{i-1,i}^+\ar[r]&D_{i-1,i-1}^+\ar[r]&0
}
$, if $1<i\leq\omega$

$
\xymatrix{
0\ar[r]&D_{1,j}^-\ar[r]&D_{1,j-1}^-\oplus M_{j,l}\ar[r]&M_{j-1,l}\ar[r]&0
}
$, if $1<j\leq l$,

$
\xymatrix{
0\ar[r]&D_{i,j}^-\ar[r]&D_{i-1,j}^-\oplus D_{i,j-1}^-\ar[r]&D_{i-1,j-1}^-\ar[r]&0
}
$, if $1<i< j\leq l$.

\item Auslander--Reiten sequences starting with $C_{ij}^+$:
 
$
\xymatrix{
0\ar[r]&C_{i,\omega}^+\ar[r]&C_{i+1,\omega}^+\oplus Z_{l,i}^+\ar[r]&Z_{l,i+1}^+\ar[r]&0
}
$, if  $1\leq i< l$, 

$
\xymatrix{
0\ar[r]&C_{l,\omega}^+\ar[r]&C_{\omega,\omega}^+\oplus Z_{l,l}^+\ar[r]&D_{l,\omega}^+\ar[r]&0
}
$,

$
\xymatrix{
0\ar[r]&C_{i,j}^+\ar[r]&C_{i+1,j}^+\oplus C_{i,j+1}^+\ar[r]&C_{i+1,j+1}^+\ar[r]&0
}
$, if $1<i\leq j\leq l$.

\item Auslander--Reiten sequences starting with $C_{ij}^-$: 

$
\xymatrix{
0\ar[r]&C_{1,\omega}^-\ar[r]&Z_{l,1}^-=P_l\oplus C_{2,\omega}^-\ar[r]&Z_{l,2}^-\ar[r]&0
}
$,

$
\xymatrix{
0\ar[r]&C_{i,\omega}^-\ar[r]&Z_{l,i}^-\oplus C_{i+1,\omega}^-\ar[r]&Z_{l,i+1}^-\ar[r]&0
}
$, if $1\leq i\leq l$,

$
\xymatrix{
0\ar[r]&C_{1,1}^-=P_\omega\ar[r]&C_{1,2}^-\oplus C_{1,2}^+\ar[r]&C_{2,2}^+\ar[r]&0
}
$,

$
\xymatrix{
0\ar[r]&C_{i,j}^-\ar[r]&C_{i+1,j}^-\oplus C_{i,j+1}^-\ar[r]&C_{i+1,j+1}^-\ar[r]&0
}
$, if $1<i\leq j\leq n$.

\item Auslander--Reiten sequences starting with $Z_{ij}^+$ (note that $Z_{i, \omega}^+=D_{i,\omega}^+$):

$
\xymatrix{
0\ar[r]&Z_{1,j}^+\ar[r]&Z_{1,j+1}^+\oplus M_{j,\omega}^\ast\ar[r]&M_{j+1,\omega}^\ast\ar[r]&0
}
$,
 
$
\xymatrix{
0\ar[r]&Z_{i,j}^+\ar[r]&Z_{i,j+1}^+\oplus Z_{i-1,j}^+\ar[r]&Z_{i-1,j+1}^+\ar[r]&0
}
$, if $i>1$. 

\item Auslander--Reiten sequences starting with $Z_{ij}^-$ (note that  $Z_{1j}^-=I_{j^\ast}$): 

$
\xymatrix{
0\ar[r]&Z_{i,1}^-=P_i\ar[r]&Z_{i-1,1}^-\oplus Z_{i,2}^-\ar[r]&Z_{i-1,2}^-\ar[r]&0
}
$, if $i>1$,

$
\xymatrix{
0\ar[r]&Z_{i,j}^-\ar[r]&Z_{i-1,j}^-\oplus Z_{i,j+1}^-\ar[r]&Z_{i-1,j+1}^-\ar[r]&0
}
$, if $1<i,j\leq l$,

\end{enumerate}
\end{proposition}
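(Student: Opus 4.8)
The plan is to invoke the Butler--Ringel combinatorial description of almost split sequences for string algebras (cf.\ \cite{BoRe}) and to apply it systematically to the families of strings already listed. Since the preceding results identify every indecomposable $\A(l)$--module with an explicit string module and rule out band modules, the task is purely combinatorial: for each non-projective indecomposable we must produce the almost split sequence ending at it, equivalently the sequence starting at each non-injective indecomposable, and the totality of these is exactly the asserted list.

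I would first recall the recipe. For a string $C$, the almost split sequence starting at $M(C)$ has middle term with at most two indecomposable summands, obtained by modifying the two ends of $C$ independently. At each end exactly one of two operations applies: either one \emph{adds a hook}, prolonging the walk through its unique admissible continuation, or, when no hook can be added, one \emph{deletes a cohook}, truncating the walk at the previous turning point. Writing $C_\ell$, $C_r$ for the strings obtained by performing the operation only at the left, respectively right, end and $C_{\ell r}$ for the one obtained by performing both, the sequence reads
$$0 \to M(C) \to M(C_\ell)\oplus M(C_r) \to M(C_{\ell r}) \to 0,$$
with the convention that a summand disappears when the corresponding end operation is empty, leaving a sequence with a single middle term. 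The only data needed to run this algorithm on $\Q_l$ are the positions of the turning points, which are governed by the two relations $\alpha^2=0$ and $a_l^\ast a_l=0$, by the unique source $1$ and sink $1^\ast$, and by the special vertex $\omega$ carrying the loop.

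Next I would treat the generic interior members of each family, where both ends admit the same kind of operation; these produce the uniform four-term mesh sequences such as $0 \to M_{i,j} \to M_{i,j-1}\oplus M_{i-1,j} \to M_{i-1,j-1}\to 0$ for $1<i\le j \le l$, and their analogues for $M_{ij}^\ast$, $D_{ij}^\pm$, $C_{ij}^\pm$ and $Z_{ij}^\pm$. I would then descend into the boundary cases, which occur exactly when an end of the string meets $1$, $1^\ast$, $\omega$ or the relation $a_l^\ast a_l$. Passing the loop forces the transition from the $D$/$C$-type strings to the $Z$-type strings (the origin of the mixed sequences such as $0\to C_{l,\omega}^+ \to C_{\omega,\omega}^+\oplus Z_{l,l}^+ \to D_{l,\omega}^+\to 0$), whereas the relation $a_l^\ast a_l=0$ obstructs continuation through $\omega$ and yields the sequences involving the simple $S_\omega$. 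Alongside this I would pin down the projective and injective string modules --- the statement already flags, for example, $P_\omega=C_{1,1}^-$, $P_i=Z_{i,1}^-$ and $I_{j^\ast}=Z_{1,j}^-$ --- using that an injective is never a left-hand term and a projective never a right-hand term of an almost split sequence; this explains the restricted index ranges, for instance the condition $i>1$ that keeps the injectives $Z_{1,j}^-$ from starting a sequence.

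The main obstacle will be the bookkeeping near the loop vertex $\omega$ together with the relation $a_l^\ast a_l=0$: there the hook and cohook operations interact with both relations at once, the turning points are not the naive ones, and several modules coincide or degenerate (compare the identifications $D_{l+1,l+1}^+\simeq C_{l+1,l+1}^+$ and $M_{l+1,l+1}\simeq M_{l+1,l+1}^\ast$ of Remark~\ref{Rem:Scalars}). To certify that the enumeration is complete rather than merely internally consistent, I would close with a counting check: each non-projective indecomposable must occur exactly once as a left-hand term $M(C)$ and each non-injective exactly once as a right-hand term, which guarantees that no almost split sequence has been omitted or repeated.
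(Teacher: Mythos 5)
Your proposal follows exactly the route the paper takes: the paper offers no written proof at all, merely stating that the Auslander--Reiten quiver is obtained ``following the treatment of Butler--Ringel,'' i.e.\ by the very hook/cohook algorithm for almost split sequences of string algebras that you describe and then propose to run on each family of strings. Your additional care about the boundary behaviour at $\omega$, the relation $a_l^\ast a_l=0$, the identification of projectives and injectives, and the final counting check that every non-projective (resp.\ non-injective) indecomposable appears exactly once as a left-hand (resp.\ right-hand) term supplies precisely the verification the paper leaves implicit.
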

The resulting Auslander--Reiten quiver of $\A(l)$ has the shape of a "christmas tree"; its bottom part consists of pre--projective modules and its top consists of $l+1$ periodic $\tau$--orbits. The duality $\nabla$ acts as a reflection through the vertical line formed by the self--dual $\A(l)$--modules $Z_{ii}^\pm$ and $D^+_{l+1,l+1}$. Figure~\ref{Fig:AR-quiver A3} shows the Auslander--Reiten quiver of $\A(3)$.

\begin{figure}[ht]
\begin{center}
\includegraphics[trim=130 100 10 150,clip,width=220pt]{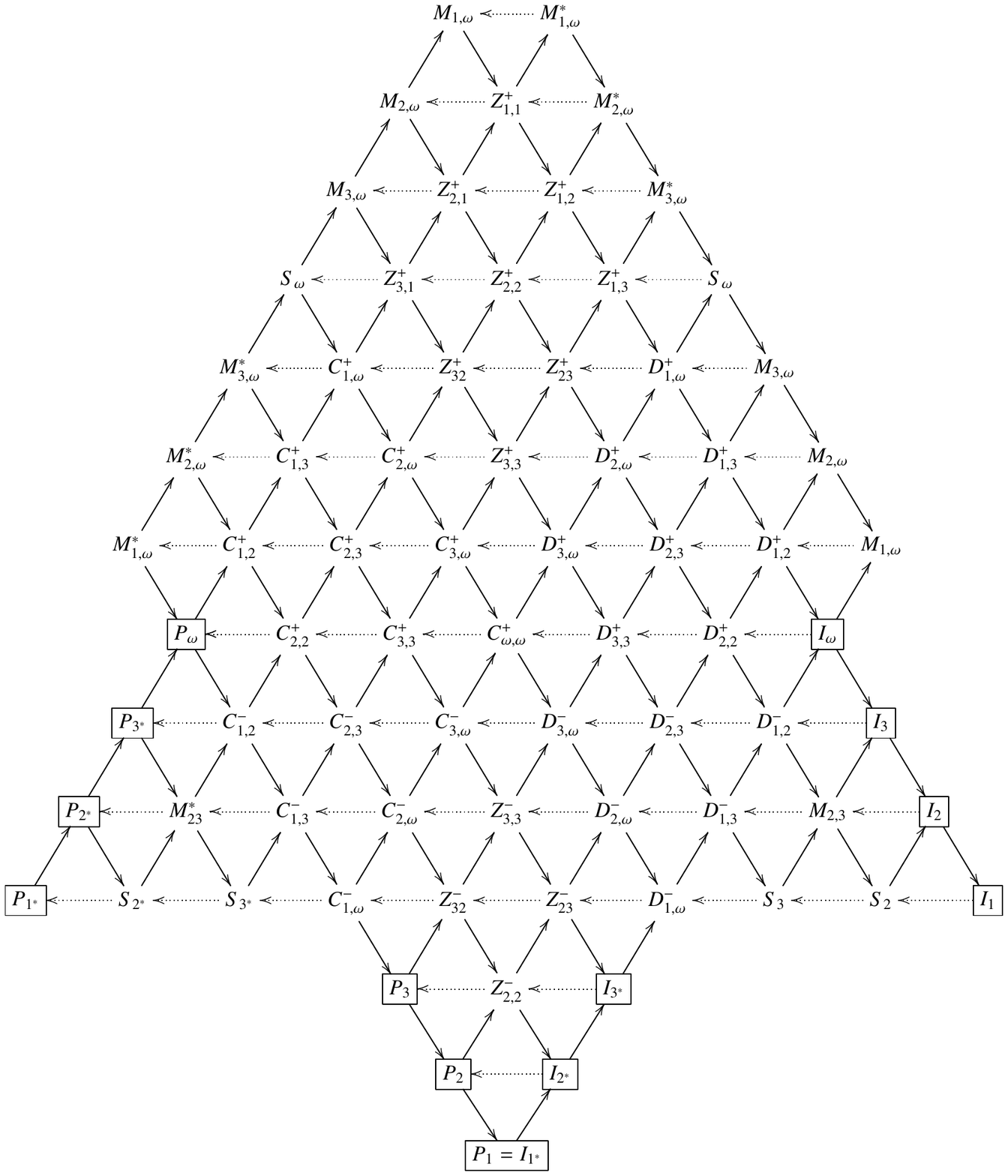}
\end{center}\caption{Auslander--Reiten quiver of $\A(3)$}\label{Fig:AR-quiver A3}
\end{figure}

\section{Parametrization of orbits}\label{sect:paramOrbits}
It is known by Panyushev \cite{Pan2} that $B$ acts finitely on the variety $\N(2)$. We aim to prove explicit parametrizations of the  orbits by means of symmetric representations and, thus, in a very combinatorial way. This way, we hope to be able to calculate e.g. degenerations in a follow-up article by means of the used representation-theoretic methods. We begin by discussing symplectic orbits in Subsection \ref{ssect:sympl_B} and deduce orthogonal orbits in Subsection \ref{ssect:orth_B}. In each type, we generalize the results to parabolic orbits in Section \ref{sect:ParGeneral}.

\subsection{Orbits in type C}\label{ssect:sympl_B}

Let $G=\SP_n$, where $n=2l$ for some integer $l$.
 We denote by $B$ the standard Borel subgroup of $G$ and  consider the algebra $\A(l)$ and its symmetric representations as discussed in \ref{sect:repAl}. Due to Lemma \ref{lem:Bijection}, we are interested in symplectic representations of dimension vector $\mathbf{d}_\bullet=(1,2,...,l,2l,l,...,2,1)$.
 
Let us begin with an example.

\begin{example}
Figure \ref{fig:exampleB} shows the complete list of isomorphism classes of symplectic representations 
in $\SRep(\A(2), \mathbf{d}_\bullet)^0$, where $n=4=2l$ and $B$ is the Borel subgroup. In more detail, the indecomposables are displayed by their coefficient quiver and then interpreted combinatorially by graphs on two vertices.

\begin{figure}[ht]
\begin{center}\includegraphics[trim=130 175 30 145,clip,width=220pt]{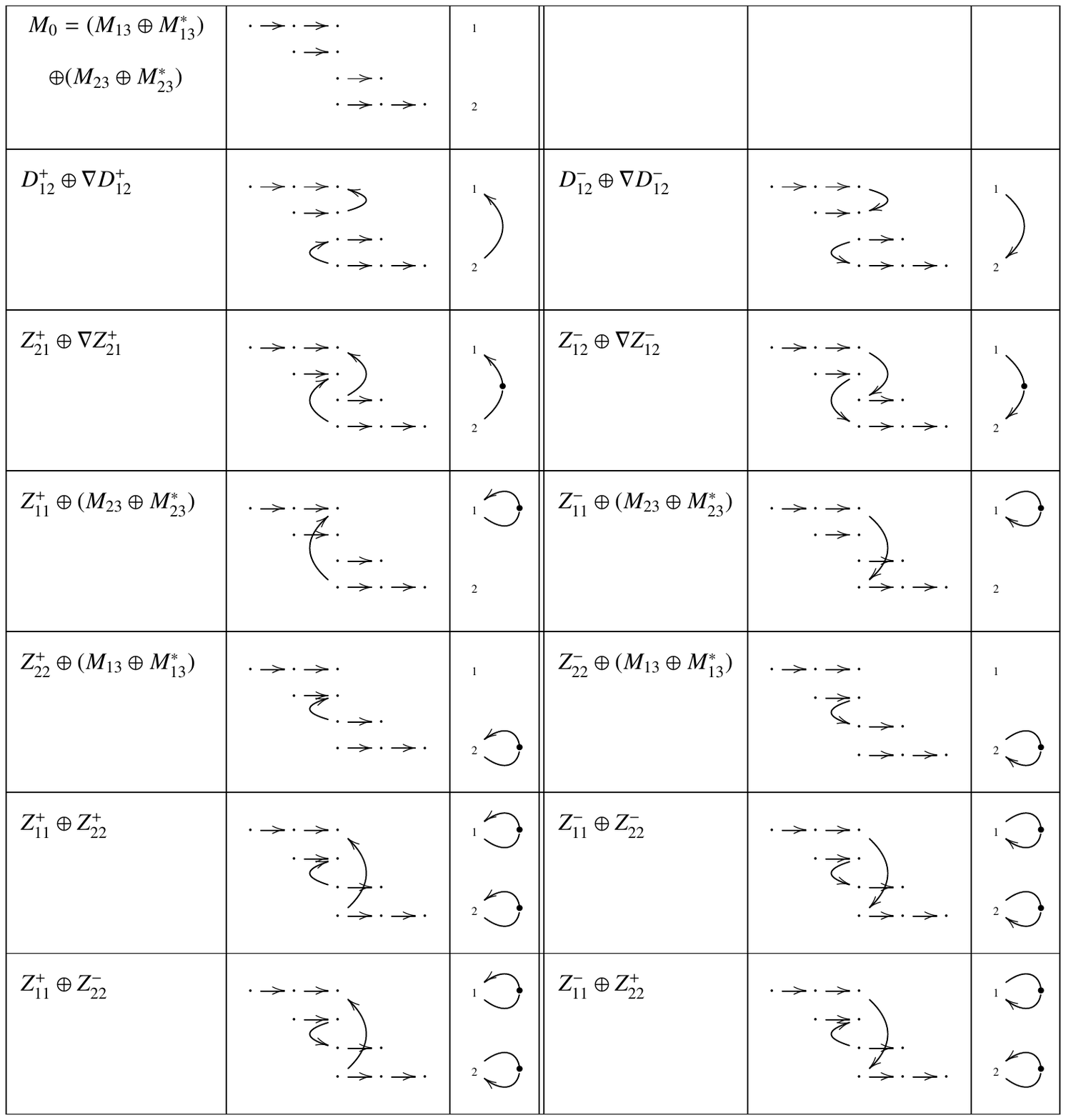}\end{center}\caption{Isomorphism classes of symplectic representations 
in $\SRep(\A(2), \mathbf{d}_\bullet)^0$ and their combinatorial interpretation}\label{fig:exampleB}
\end{figure}

\end{example}

This observation leads us to the following definition. 

\begin{definition}\label{Def:SOLP}
A \emph{symplectic oriented link pattern} (solp for short) of size $l$ consists of a  set of $l$ vertices $1,2,\cdots l$ together with a collection of oriented arrows between these vertices and a collection of oriented arrows with dots between these vertices, such that
\begin{itemize}
\item[(B)]\label{B} every vertex is touched by at most one arrow;
\item[(SpOr)]\label{SpOr} there are no loops of arrows without dots (that is, no arrows without dots from a vertex to itself).
\end{itemize}
We denote by $\Solp_{l}$ the set of solps of size $l$.
\end{definition}

The definition of symplectic link patterns leads to a combinatorial parametrization of symplectic Borel-orbits in $\N(2)$.

\begin{theorem}\label{Thm:symp_param_B}
The $B$--orbits in the variety $\N(2)\subseteq \fsp_n$ are in bijection with the set $\Solp_l$ of solps  of size $l$. 
\end{theorem}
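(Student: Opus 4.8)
The plan is to combine the bijection of Lemma~\ref{lem:Bijection} with the Krull--Remak--Schmidt decomposition furnished by the classification of indecomposable symplectic $\A(l)$--representations. By Lemma~\ref{lem:Bijection}, the $B$--orbits in $\N(2)\subseteq\fsp_n$ are in bijection with the isoclasses of symplectic representations in $\SRep(\A(l),\mathbf{d}_\bullet)^0$, so it suffices to put the latter in bijection with $\Solp_l$. First I would recall from Subsection~\ref{Subsec:Indecomposables} that the indecomposable symplectic $\A(l)$--representations are exactly $Z_{ii}^\pm$, $M_{ij}\oplus M_{ij}^\ast$, $D_{ij}^\pm\oplus C_{ij}^\pm$, $D_{l+1,l+1}^+$, and $Z_{ij}^\pm\oplus Z_{ji}^\pm$ (for $i\neq j$). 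Since every symplectic representation decomposes uniquely (up to symmetric isomorphism) into such indecomposables, the classification reduces to determining which multisets of these summands yield a representation of dimension vector $\mathbf{d}_\bullet=(1,2,\dots,l,2l,l,\dots,2,1)$ lying in the open locus $\SRep(\A(l),\mathbf{d}_\bullet)^0$ of maximal-rank maps.

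The main step is the dimension-vector bookkeeping. I would read off the dimension vector of each indecomposable summand from its coefficient quiver and impose that the summands add up to $\mathbf{d}_\bullet$. The maximal-rank condition at the arrows $a_i$ forces the multiplicity of each $M_{ij}$-type string to be controlled: concretely, the constraint $(\dimv)_i=i$ for $1\leq i\leq l$ together with injectivity of the maps $a_i$ means that at each vertex $i$ precisely the right number of strings must pass through, so the ``long'' summands (those supported through $\omega$) are forced to terminate at prescribed source vertices. The upshot I expect is that each admissible decomposition is encoded by assigning, to each vertex $i\in\{1,\dots,l\}$, at most one arrow describing how the string starting at $i$ continues: either it pairs with a string starting at some other vertex $j$ (through the loop $\alpha$, giving a $Z^\pm$-type or $D^\pm\oplus C^\pm$-type summand, an oriented arrow $i\to j$ or $j\to i$), or it closes up with itself through $\alpha$ (giving a $Z_{ii}^\pm$ summand, a dotted loop at $i$), or it does not interact (a trivial string, no arrow). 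This is exactly the combinatorial data of a solp: condition (B) expresses that each vertex is touched at most once because each source vertex $i$ gives rise to exactly one string, and condition (SpOr) (no undotted loops) reflects Remark~3.\ref{Rem:Scalars}/the symplectic constraint that the self-dual summand $Z_{ii}^+$ exists only with a sign forcing the dot, i.e.\ an undotted self-interaction is not realizable symplectically.

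The hard part will be making the dictionary between summands and arrows precise and bijective: I must show that the $\pm$ superscripts (the two scalars on the loop $\alpha$) correspond exactly to the presence or absence of the dot, that the ordered pair $(i,j)$ versus $(j,i)$ matches the orientation of the arrow, and that the symplectic pairing constraints (the sign conventions fixed in the Convention preceding Proposition~\ref{Prop:Nabla}) rule out precisely the undotted self-loops and nothing more. In particular I would verify that $\nabla$-self-dual summands $Z_{ii}^\pm$ and $D_{l+1,l+1}^+$ correspond to self-interactions or to the ``central'' part of the pattern, using Proposition~\ref{Prop:Nabla}. I would then check surjectivity (every solp arises from an admissible decomposition) and injectivity (distinct decompositions give distinct solps) by the uniqueness in Krull--Remak--Schmidt together with Remark~3.\ref{Rem:Scalars}, which records the only coincidences among the listed modules. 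The potential subtlety is ensuring the maximal-rank open condition is equivalent to condition (B); I expect this to follow from counting dimensions at each vertex $i\le l$, since a violation of (B) would force either a rank drop or an overflow of the dimension vector.
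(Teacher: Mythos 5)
Your proposal follows essentially the same route as the paper: Lemma~\ref{lem:Bijection} reduces the problem to isoclasses in $\SRep(\A(l),\mathbf{d}_\bullet)^0$, and Krull--Remak--Schmidt together with the classification of indecomposable symplectic summands gives the dictionary (summand $\leftrightarrow$ arrow/dot/loop) that the paper records in its table. If anything you supply more detail than the paper's rather terse argument; the only small inaccuracy is that the exclusion of undotted loops in condition (SpOr) comes from the dimension-vector bookkeeping at the vertices $1,\dots,l$ (each block has size $1$ in the Borel case) rather than from a symplectic sign obstruction, since $D_{ii}^+\oplus C_{ii}^+$ is a perfectly good symplectic representation that reappears as an undotted loop in the parabolic classification.
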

\begin{proof}

By Krull--Remak--Schmidt, there is a quite obvious bijection between the set of solps of size $l$ and the set of symplectic representations  in $\SRep(\A(l),\mathbf{d}_\bullet)^0$ up to isomorphism which maps an isomorphism class $[M]$ of a symplectic representation $M$ to the subquiver of the coefficient quiver of $M$ induced by $M_\alpha$. Since this subquiver is completely determined by the vertices $1,...l$ and the arrows  By Lemma \ref{lem:Bijection}, the claim follows.

Let us state in detail, how the bijection works and how we can read of the representative $2$-nilpotent matrix of a particular solp. For this,  the following table gives a recipe; let $i<j$ and translate $i^\ast$ and $j^\ast$ via ($k^\ast=n-k+1$):

\[\begin{tabular}{|c|c|c|}
\hline
(Part of) solp &  Indecomposable & (Part of) Matrix\\\hline
\xymatrix@C=4pt{
\textrm{\tiny{i}}} & $M_{i,l+1}\oplus M_{i,l+1}^\ast$ & $0$ \\\hline
&&\\[-1ex]
\xymatrix{\textrm{\tiny{i}}&
\textrm{\tiny{j}}\ar@/_1pc/[l]}&$D^+_{i,j}\oplus C^+_{i,j}$& $E_{i,j}-E_{j^\ast,i^\ast}$\\\hline
&&\\[-1ex]
\xymatrix{\textrm{\tiny{i}}\ar@/^1pc/[r]&
\textrm{\tiny{j}}}&$D^-_{i,j}\oplus C^-_{i,j}$& $E_{j,i}-E_{i^\ast,j^\ast}$\\\hline
&&\\[-2ex]
\xymatrix{\textrm{\tiny{i}}&
\textrm{\tiny{j}}\ar@/_1pc/[l]|{\bullet}}&$Z^+_{i,j}\oplus Z^+_{j,i}$&$E_{i,j^\ast}+E_{j,i^\ast}$ \\\hline
&&\\[-2ex]
\xymatrix{\textrm{\tiny{i}}\ar@/^1pc/[r]|{\bullet}&
\textrm{\tiny{j}}}&$Z^-_{i,j}\oplus Z^-_{j,i}$& $E_{j^\ast,i}+E_{i^\ast,j}$\\\hline
&&\\[-2ex]
\xymatrix@C=4pt{
\textrm{\tiny{i}}\ar@(ur,ul)|{\bullet}}&$Z^+_{i,i}$&$E_{i,i^\ast}$ \\\hline
&&\\[-2ex]
\xymatrix@C=4pt{
\textrm{\tiny{i}}\ar@(ul,ur)|{\bullet}}&$Z^-_{i,i}$& $E_{i^\ast,i}$ \\\hline
\end{tabular} \]

\end{proof}
Since we have a combinatorial description, we can count the number of orbits.

\begin{proposition}
Let $s_{l}$ be the cardinality of~ $\Solp_{l}$. Then the sequence $\{s_{l}\}$ is determined by
\begin{itemize}
\item $s_0=1$,
\item $s_1=3$,
\item $s_{l}=3s_{l-1}+4(l-1)s_{l-2}$.
\end{itemize} 
\end{proposition}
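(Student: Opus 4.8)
The plan is to establish the recurrence by a deletion argument on the last vertex, after first pinning down the local structure of a solp at each vertex. By condition (B) every vertex is incident to at most one arrow, and by condition (SpOr) the only admissible loops are dotted ones. First I would observe that this forces each vertex of a solp into exactly one of three mutually exclusive situations: it is isolated; it carries a dotted loop; or it is joined to exactly one other vertex by a single arrow. A dotted loop comes in two flavours according to its orientation (the cases $Z_{i,i}^+$ and $Z_{i,i}^-$ of the table in the proof of Theorem~\ref{Thm:symp_param_B}), so an unmatched vertex can be decorated in $1+2=3$ ways. An arrow between two distinct vertices $i$ and $j$ may be dotted or undotted and may point in either direction, giving the four configurations $D_{i,j}^\pm\oplus C_{i,j}^\pm$ and $Z_{i,j}^\pm\oplus Z_{j,i}^\pm$ of the same table. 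Hence a solp of size $l$ is exactly the datum of a partial matching on $\{1,\dots,l\}$ together with a choice of one of $3$ decorations for each unmatched vertex and one of $4$ decorations for each matched pair.

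With this dictionary in hand, I would derive the recurrence by conditioning on the status of the vertex $l$. If $l$ is unmatched, it may be decorated in one of $3$ ways while the vertices $\{1,\dots,l-1\}$ carry an arbitrary solp of size $l-1$; this contributes $3s_{l-1}$. If instead $l$ is matched, it is joined to one of the $l-1$ vertices of $\{1,\dots,l-1\}$, the connecting arrow is decorated in one of $4$ ways, and the remaining $l-2$ vertices carry an arbitrary solp of size $l-2$; this contributes $4(l-1)s_{l-2}$. Adding the two disjoint cases yields
\[
s_l = 3s_{l-1} + 4(l-1)s_{l-2}.
\]
The base cases are immediate: the empty pattern gives $s_0=1$, and a single vertex can only be isolated or carry one of the two oriented dotted loops, giving $s_1=3$.

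I expect no serious obstacle; the content is entirely in the initial local classification. The one point meriting care is to verify that conditions (B) and (SpOr) genuinely produce the clean trichotomy \emph{isolated / dotted loop / single arrow} at each vertex with no further cases (in particular that undotted loops and multiple incident arrows are excluded), so that the decorated-matching description is a true bijection. Granting this, deleting vertex $l$ together with any incident arrow restricts a solp to a legitimate solp on the smaller vertex set, and the recurrence follows by the elementary sum-over-cases above.
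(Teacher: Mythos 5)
Your argument is correct and is essentially the paper's own proof: the paper simply splits $\Solp_l$ according to whether a distinguished vertex is touched by an arrow (it uses vertex $1$ where you use vertex $l$, an immaterial difference) and leaves the counting of the $3$ decorations of an untouched vertex and the $4(l-1)$ choices for a matched one implicit. You have merely supplied the local-structure verification that the paper omits.
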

\begin{proof}
We divide the set $\Solp_{l}$ into the subset of symmetric link patterns where vertex $1$ is not touched by any arrow and its complement. 
\end{proof}
The sequence $1,3,13, 63, 345,2043,...$ of numbers of slps is classified  in OEIS as A202837 \cite{OEIS_slp}. 
\begin{remark}
For $\GL_n$, the oriented link patterns considered in \cite{BoRe} only have to satisfy condition $(B)$ that is, the $2$--nilpotency conditions. We hence see that solps  are special oriented link patterns as defined in \cite{BoRe}. This is not surprising; indeed the following fact is known by \cite{MWZ}: if two symplectic elements are conjugate under the Borel of $\GL_n$, then they are conjugate under the Borel of $\SP_n$, as well. 
\end{remark}

We finish the symplectic classification by describing all $B$-orbits for $\SP(4)$ in detail.

\begin{example}
The $B$-orbits in $\N(2)\subseteq \mathfrak{sp}_n$ are classified by the collection of solps of size $2$. These are explicitly listed in the following table and the corresponding representative matrices in $\N(2)$ are displayed. This completes the example which we already considered in Figure \ref{fig:exampleB}.

$$
\tiny
\begin{tabular}{|c|c|c|c|c|}
\hline&&&&\\[-1ex]
\xymatrix{\textrm{\tiny{1}}\ar@/^1pc/[r]&
\textrm{\tiny{2}}}
&\xymatrix{\textrm{\tiny{1}}&
\textrm{\tiny{2}}\ar@/_1pc/[l]}
&\xymatrix{\textrm{\tiny{1}}\ar@/^1pc/[r]|{\bullet}&
\textrm{\tiny{2}}}
&\xymatrix{\textrm{\tiny{1}}&
\textrm{\tiny{2}}\ar@/_1pc/[l]|{\bullet}}&\xymatrix{\textrm{\tiny{1}}&
\textrm{\tiny{2}}}
\\\hline
&&&&\\[-1ex]
$\left(\begin{array}{cccc}
0&0&0&0\\
1&0&0&0\\
0&0&0&0\\
0&0&-1&0\end{array}\right)$ 
& $\left(\begin{array}{cccc}
0&1&0&0\\
0&0&0&0\\
0&0&0&-1\\
0&0&0&0\end{array}\right)$ 
&$\left(\begin{array}{cccc}
0&0&0&0\\
0&0&0&0\\
1&0&0&0\\
0&1&0&0\end{array}\right)$ 
&$\left(\begin{array}{cccc}
0&0&1&0\\
0&0&0&1\\
0&0&0&0\\
0&0&0&0\end{array}\right)$ 
&$\left(\begin{array}{cccc}
0&0&0&0\\
0&0&0&0\\
0&0&0&0\\
0&0&0&0\end{array}\right)$\\\hline
&&&\\[-2.3ex]\cline{1-4}
&&&\\[-1ex]
\xymatrix{\textrm{\tiny{1}}\ar@(ul,ur)|{\bullet}&
\textrm{\tiny{2}}}
&\xymatrix{\textrm{\tiny{1}}\ar@(ur,ul)|{\bullet}&
\textrm{\tiny{2}}}
&\xymatrix{\textrm{\tiny{1}}&
\textrm{\tiny{2}}\ar@(ul,ur)|{\bullet}}
&\xymatrix{\textrm{\tiny{1}}&
\textrm{\tiny{2}}\ar@(ur,ul)|{\bullet}}
\\\cline{1-4}&&&\\[-1ex]
$\left(\begin{array}{cccc}
0&0&0&0\\
0&0&0&0\\
0&0&0&0\\
1&0&0&0\end{array}\right)$
&$\left(\begin{array}{cccc}
0&0&0&1\\
0&0&0&0\\
0&0&0&0\\
0&0&0&0\end{array}\right)$
&$\left(\begin{array}{cccc}
0&0&0&0\\
0&0&0&0\\
0&1&0&0\\
0&0&0&0\end{array}\right)$
&$\left(\begin{array}{cccc}
0&0&0&0\\
0&0&1&0\\
0&0&0&0\\
0&0&0&0\end{array}\right)$\\\cline{1-4}
&&&\\[-2.3ex]\cline{1-4}
&&&\\[-1ex]
\xymatrix{\textrm{\tiny{1}}\ar@(ul,ur)|{\bullet}&
\textrm{\tiny{2}}\ar@(ul,ur)|{\bullet}}
&\xymatrix{\textrm{\tiny{1}}\ar@(ul,ur)|{\bullet}&
\textrm{\tiny{2}}\ar@(ur,ul)|{\bullet}}
&\xymatrix{\textrm{\tiny{1}}\ar@(ur,ul)|{\bullet}&
\textrm{\tiny{2}}\ar@(ul,ur)|{\bullet}}
&\xymatrix{\textrm{\tiny{1}}\ar@(ur,ul)|{\bullet}&
\textrm{\tiny{2}}\ar@(ur,ul)|{\bullet}}\\\cline{1-4}
&&&\\[-1ex]
$\left(\begin{array}{cccc}
0&0&0&0\\
0&0&0&0\\
0&1&0&0\\
1&0&0&0\end{array}\right)$
&$\left(\begin{array}{cccc}
0&0&0&0\\
0&0&1&0\\
0&0&0&0\\
1&0&0&0\end{array}\right)$
&$\left(\begin{array}{cccc}
0&0&0&1\\
0&0&0&0\\
0&1&0&0\\
0&0&0&0\end{array}\right)$
&$\left(\begin{array}{cccc}
0&0&0&1\\
0&0&1&0\\
0&0&0&0\\
0&0&0&0\end{array}\right)$\\\cline{1-4}
\end{tabular}
$$

\end{example}

\subsection{Orbits in types B and D}\label{ssect:orth_B}

Let $G=\Or_n$, where $n\in\{2l,2l+1\}$. We denote by $B$ the standard Borel subgroup of $G$ and  consider  the algebra $\A(l)$ as discussed in Section \ref{sect:repAl}. We are interested in orthogonal representations of dimension vector $\mathbf{d}_\bullet=(1,2,...,l,n,l,...,2,1)$ by Lemma \ref{lem:Bijection}. 

As before, we begin with an example.

\begin{example}
Figure \ref{fig:ortho_exampleB4} shows the complete list of isomorphism classes of orthogonal representations 
in $\SRep(\A(2), \mathbf{d}_\bullet)^0$.

The first table shows the isomorphism classes of orthogonal representations where $\mathbf{d}_\bullet=(1,2,4,2,1)$, i.e. it corresponds to $\Or_4$.

The second table shows the isomorphism classes of orthogonal representations where $\mathbf{d}_\bullet=(1,2,5,2,1)$, i.e. it corresponds to $\Or_5$.

\begin{figure}[ht]
\begin{center}
\includegraphics[trim=130 445 70 145,clip,width=220pt]{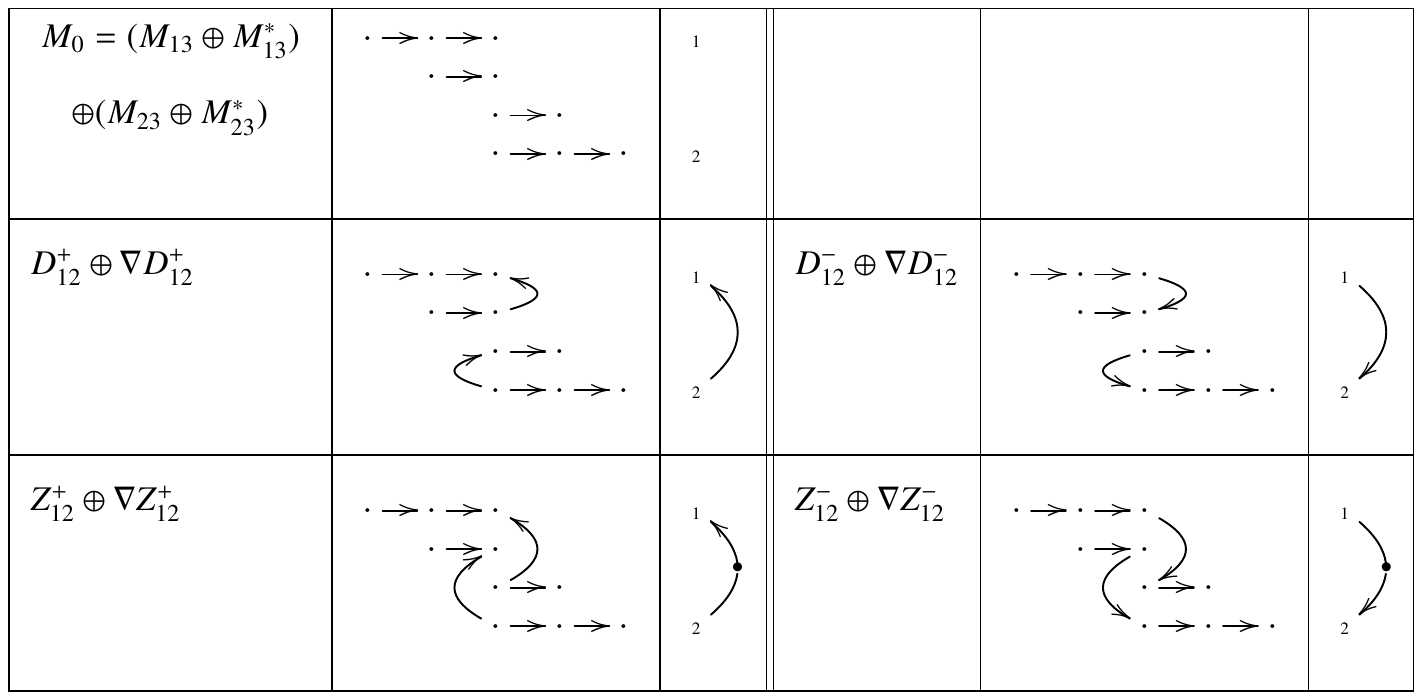}\end{center}\begin{center}
\includegraphics[trim=130 450 65 145,clip,width=220pt]{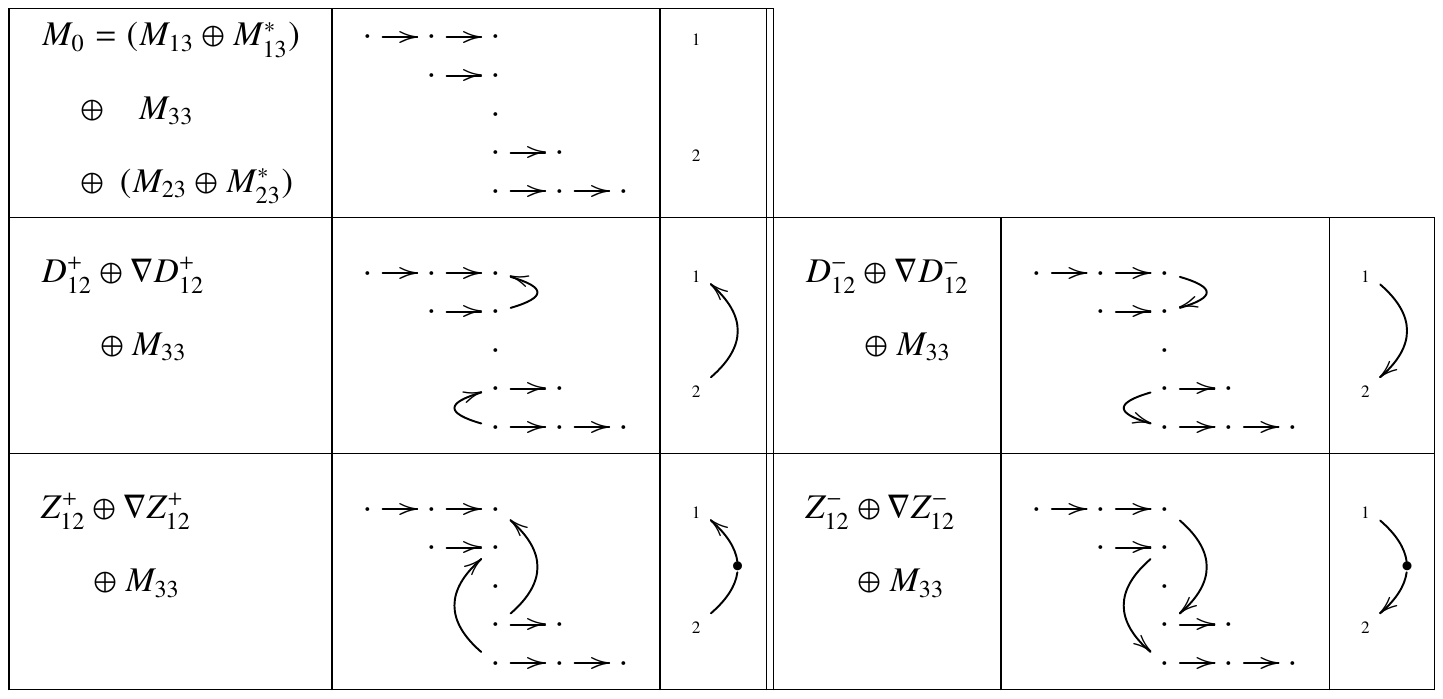}
\end{center}
\caption{Isomorphism classes of orthogonal representations 
in $\SRep(\A(2), \mathbf{d}_\bullet)^0$ and their combinatorial interpretation}\label{fig:ortho_exampleB4}
\end{figure}

\end{example}

This observation leads us to the following definition. 

\begin{definition}\label{Def:OOLP}
An \emph{orthogonal oriented link pattern} (oolp for short) of size $l$ consists of a  set of $l$ 
 vertices $1,2,\cdots l$ together with a collection of oriented arrows between these vertices and a collection of oriented arrows with dots between these vertices, such that
\begin{itemize}
\item[(B)] every vertex is touched by at most one arrow;
\item[(SpOr)] there are no loops of arrows without dots (that is, no arrows without dots from a vertex to itself).
\item[(Or)]\label{Or} there are no loops of arrows with dots (that is, no arrows with dots from a vertex to itself).
\end{itemize}
We denote by $\Oolp_{l}$ the set of oolps of size $l$.
\end{definition}
Thus, an oolp is a solp without loops.

As in the symplectic case, the parametrization of the Borel-orbits in $\N(2)$ follows straight away.

\begin{theorem}\label{Thm:ortho_paramB}
The $B$--orbits in the variety $\N(2)\subseteq \fo_n$, where $n\in\{2l,2l+1\}$, are in bijection with the set $\Oolp_l$ of oolps of size $l$. 
\end{theorem}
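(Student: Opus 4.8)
The plan is to run the argument of Theorem~\ref{Thm:symp_param_B} almost verbatim, replacing symplectic representations by orthogonal ones throughout. First I would invoke Lemma~\ref{lem:Bijection} in its orthogonal incarnation: it reduces the classification of $B$--orbits in $\N(2)\subseteq\fo_n$ to the classification, up to symmetric isomorphism, of orthogonal representations in $\SRep(\A(l),\mathbf{d}_\bullet)^0$. By the Krull--Remak--Schmidt theorem in the orthogonal category, each such representation decomposes uniquely into orthogonal indecomposables, so it suffices to exhibit a bijection between $\Oolp_l$ and the set of isomorphism classes of these orthogonal representations.

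Next I would build this bijection exactly as in the symplectic case, sending an isomorphism class $[M]$ to the subquiver of its coefficient quiver induced by the loop map $M_\alpha$, a graph on the vertices $1,\dots,l$. The list of orthogonal indecomposables established earlier --- namely $M_{ij}\oplus M_{ij}^\ast$, $D_{ij}^\pm\oplus C_{ij}^\pm$, $Z_{ij}^\pm\oplus Z_{ji}^\pm$ and $M_{l+1,l+1}$ --- is precisely the symplectic list of Theorem~\ref{Thm:symp_param_B} with the self-dual summands $Z_{ii}^\pm$ removed. Hence the first five rows of the translation table in the proof of Theorem~\ref{Thm:symp_param_B} (isolated vertices, dotless arrows, and dotted arrows between \emph{distinct} vertices) still apply, while the two rows producing the loops $Z_{ii}^\pm$ are no longer available. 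This is exactly condition~(Or) in Definition~\ref{Def:OOLP}, so the admissible combinatorial data are precisely the oolps of size $l$; the dimension vector $\mathbf{d}_\bullet$ together with the maximal-rank condition again forces the multiplicities of the summands, and the map is a bijection.

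The one genuinely new point, with no symplectic analogue, is the odd orthogonal case $n=2l+1$ (type $B$), where $d_\omega=2l+1$ carries one extra dimension at $\omega$. Here I would absorb this by the self-dual one-dimensional indecomposable $M_{l+1,l+1}$, which is supported only at $\omega$ and is the unique indecomposable admitting an orthogonal structure of odd dimension; it contributes the missing unit at $\omega$ without touching the loop subquiver, hence does not change the associated oolp. I expect the main obstacle to be checking that the orthogonal form on the vertex space at $\omega$ restricts compatibly to each summand --- in particular that the $M_{i,l+1}\oplus M_{i,l+1}^\ast$ backbone, the arrow-summands, and the single copy of $M_{l+1,l+1}$ assemble into a non-degenerate symmetric form of the correct dimension --- but this is a direct computation with the explicit coefficient quivers and the sign conventions fixed earlier in Section~\ref{sect:repAl}, entirely parallel to the symplectic verification already carried out (and consistent with $\nabla$ as described in Proposition~\ref{Prop:Nabla}). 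Once this is in place, Lemma~\ref{lem:Bijection} transports the bijection to $B$--orbits in $\N(2)$, completing the proof.
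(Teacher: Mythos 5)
Your proposal is correct and follows essentially the same route as the paper: reduce via Lemma~\ref{lem:Bijection}, apply Krull--Remak--Schmidt to decompose into the orthogonal indecomposables, read off the oolp from the subquiver of the coefficient quiver induced by $M_\alpha$ (with the absence of the loops $Z_{ii}^\pm$ accounting for condition (Or)), and in the odd case absorb the extra dimension at $\omega$ into the self-dual summand $M_{l+1,l+1}$, exactly as the paper does. The only cosmetic difference is that the paper records the translation in an explicit table (with the sign changes in the representative matrices for the dotted arrows), which you defer to a ``direct computation.''
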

\begin{proof}

In a similar manner to Theorem \ref{Thm:symp_param_B}, there is a quite obvious bijection between the set $\Oolp_l$ of oolps  of size $l$ and the set of isoclasses of orthogonal representations of $\A(l)$ in $\SRep(\A(l),\mathbf{d}_\bullet)^0$ which maps an isomorphism class $[M]$ of an orthogonal representation to a particular subquiver of the coefficient quiver of $M$ induced by $M_\alpha$. 

In case $n$ is even, this is done analogously to the symplectic case. 

In case $n$ is odd, the middle vertex of this particular subquiver is always determined as a fixed point as visualized in Figure \ref{fig:ortho_exampleB4}. This is due to the fact that it corresponds to the direct summand $M_{\omega,\omega}$. The diagram representing $M_\alpha$ can thus be restricted to $2l$ vertices and we obtain the sought subquiver. This can be translated to an oolp as in the symplectic case again. In more detail, the translation of indecomposables to vertices / arrows in the oolp can be read off the following table - where also the translation to matrices in $\N(2)$ can be found.
\[\begin{tabular}{|c|c|c|}
\hline
(Part of) solp &  Indecomposable & (Part of) Matrix\\\hline
\xymatrix@C=4pt{
\textrm{\tiny{i}}} & $M_{i,l+1}\oplus M_{i,l+1}^\ast$ & $0$ \\\hline
&&\\[-1ex]
\xymatrix{\textrm{\tiny{i}}&
\textrm{\tiny{j}}\ar@/_1pc/[l]}&$D^+_{i,j}\oplus C^+_{i,j}$& $E_{i,j}-E_{j^\ast,i^\ast}$\\\hline
&&\\[-1ex]
\xymatrix{\textrm{\tiny{i}}\ar@/^1pc/[r]&
\textrm{\tiny{j}}}&$D^-_{i,j}\oplus C^-_{i,j}$& $E_{j,i}-E_{i^\ast,j^\ast}$\\\hline
&&\\[-2ex]
\xymatrix{\textrm{\tiny{i}}&
\textrm{\tiny{j}}\ar@/_1pc/[l]|{\bullet}}&$Z^+_{i,j}\oplus Z^+_{j,i}$& $E_{i,j^\ast}-E_{j,i^\ast}$\\\hline
&&\\[-2ex]
\xymatrix{\textrm{\tiny{i}}\ar@/^1pc/[r]|{\bullet}&
\textrm{\tiny{j}}}&$Z^-_{i,j}\oplus Z^-_{j,i}$&$E_{j^\ast,i}-E_{i^\ast,j}$ \\\hline
\end{tabular} \]
 As before, the claim follows from Lemma \ref{lem:Bijection}. 
\end{proof}

\begin{proposition}
Let $o_l$ be the cardinality of $\Oolp_l$. Then the sequence $\{o_l\}$ is determined by
\begin{itemize}
\item $o_0=1$,
\item $o_1=1$,
\item $o_l=o_{l-1}+4(l-1)o_{l-2}$.
\end{itemize} 
\end{proposition}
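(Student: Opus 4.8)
The plan is to reuse the recursive idea indicated for the symplectic count and split $\Oolp_l$ according to the behaviour of the distinguished vertex $1$. By condition (B) there are exactly two possibilities: either vertex $1$ is untouched by any arrow, or it is touched by exactly one arrow, and I would count each possibility separately.

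First I would treat the case in which $1$ is untouched. Then deleting vertex $1$ restricts an oolp of size $l$ to an oolp on the vertices $\{2,\dots,l\}$, and this restriction is clearly a bijection onto $\Oolp_{l-1}$; hence this subset has cardinality $o_{l-1}$. Next I would treat the case in which $1$ is touched. Since conditions (SpOr) and (Or) forbid all loops, the unique arrow at $1$ must join it to a genuine partner $j\in\{2,\dots,l\}$, giving $l-1$ choices for $j$; and for each fixed unordered pair $\{1,j\}$ the arrow may be decorated in exactly four ways---two orientations, each dotted or undotted---as recorded in the off-diagonal rows of the translation table in the proof of Theorem~\ref{Thm:ortho_paramB}. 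Deleting both $1$ and its partner $j$ then yields an oolp on the remaining $l-2$ vertices, so this case contributes $4(l-1)o_{l-2}$. Adding the two contributions gives $o_l=o_{l-1}+4(l-1)o_{l-2}$.

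Finally I would record the base cases: $\Oolp_0$ consists of the empty pattern alone, so $o_0=1$, and when $l=1$ the single vertex cannot carry a loop of either kind, hence must stay isolated, giving $o_1=1$. The argument is essentially a one-line case distinction, so there is no serious obstacle; the only point that warrants attention is the decoration count. One must check that the four decorated arrows on a fixed pair $\{1,j\}$ are genuinely distinct oolps and that none of them is excluded by (B), (SpOr) or (Or)---which is precisely the statement that the four off-diagonal rows of the orthogonal table survive while the loop rows of the symplectic table are now forbidden. Confirming this justifies the factor $4$ and completes the recurrence.
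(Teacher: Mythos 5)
Your proposal is correct and follows exactly the paper's argument: the paper's own (very terse) proof consists precisely of the case split on whether vertex $1$ is touched by an arrow, and your count of $o_{l-1}$ for the untouched case and $4(l-1)o_{l-2}$ for the touched case (two orientations, dotted or undotted, no loops allowed by (SpOr) and (Or)) supplies the details the paper leaves implicit. The base cases and the verification against the sequence $1,1,5,13,73,\dots$ are also as intended.
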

\begin{proof}
We divide the set $\Oolp_l$ into the subset of oolps where vertex $1$ is not touched by any arrow and its complement. 
\end{proof}
The sequence $1,1,5,13,73,281,1741,...$ which gives $\{o_l\}$ is classified  in OEIS as A115329 \cite{OEIS_olp}.

\begin{remark}
As before, we see that oolps are special oriented link patterns. As in the symplectic case, this fact also follows from \cite{MWZ}: if two orthogonal elements are conjugate under the Borel of $\GL_n$, then they are conjugate under the Borel of $\Or_n$, as well. 
\end{remark}
Again, we end the subsection by writing down the explicit classification of $B$-orbits in $\N(2)$ for the orthogonal group $\Or(4)$.
\begin{example}
Let $B\subset \Or(4)$, then the collection of oolps of size $2$ and the corresponding matrices in $\N(2)\subset \fo_4$ are given in the following table:
$$
\tiny
\begin{tabular}{|c|c|c|c|c|}
\hline&&&&\\[-1ex]
\xymatrix{\textrm{\tiny{1}}\ar@/^1pc/[r]&
\textrm{\tiny{2}}}
&\xymatrix{\textrm{\tiny{1}}&
\textrm{\tiny{2}}\ar@/_1pc/[l]}
&\xymatrix{\textrm{\tiny{1}}\ar@/^1pc/[r]|{\bullet}&
\textrm{\tiny{2}}}
&\xymatrix{\textrm{\tiny{1}}&
\textrm{\tiny{2}}\ar@/_1pc/[l]|{\bullet}}&\xymatrix{\textrm{\tiny{1}}&
\textrm{\tiny{2}}}
\\\hline
&&&&\\[-1ex]
$\left(\begin{array}{cccc}
0&0&0&0\\
1&0&0&0\\
0&0&0&0\\
0&0&-1&0\end{array}\right)$ 
& $\left(\begin{array}{cccc}
0&1&0&0\\
0&0&0&0\\
0&0&0&-1\\
0&0&0&0\end{array}\right)$ 
&$\left(\begin{array}{cccc}
0&0&0&0\\
0&0&0&0\\
1&0&0&0\\
0&-1&0&0\end{array}\right)$ 
&$\left(\begin{array}{cccc}
0&0&1&0\\
0&0&0&-1\\
0&0&0&0\\
0&0&0&0\end{array}\right)$ 
&$\left(\begin{array}{cccc}
0&0&0&0\\
0&0&0&0\\
0&0&0&0\\
0&0&0&0\end{array}\right)$\\\hline
\end{tabular}
$$

\end{example}
%\subsection{Dimensions of orbits}
%We provide a description of orbit dimensions. Since an orbit closure $\overline{B.x}$ is given as the union of $B.x$ and orbits of smaller dimensions, this is the first step towards calculating degenerations.
%
%\begin{proposition}\label{Prop:BOrbitQuiver}
%Let $x\in \N(2)$ and let $M$ be the $\A(l)$--module corresponding to the orbit of $x$ via the translation of Lemma \ref{lem:Bijection}. For the symplectic and orthogonal case, we have
%$$
%\dim B.x=\dim\Hom_{\SRep(\A(k))}(M_0,M_0)-\dim\Hom_{\SRep(\A(k))}(M,M).
%$$
%
%\end{proposition}
%\begin{proof}
%The dimension of $B.x$ is given by $\dim B- \dim \stab_B(x)$. We have $\dim B=\dim\Hom_{\SRep(\A(k))}(M_0,M_0)$ and in view of Corollary~\ref{Cor:Stabilizer} used as in  Lemma~\ref{lem:Bijection} we also have $\dim \stab_B(x)=\dim\Hom_{\SRep(\A(k))}(M,M)$.
%\end{proof}

\section{Generalization to parabolic actions}\label{sect:ParGeneral}
Since there are only finitely many Borel orbits in $\N(2)$, we know that every parabolic $P$ acts finitely on the variety $\N(2)$, too. We aim to find explicit parametrizations for these parabolic orbits in case $P$ is a standard parabolic subgroup of $G$.

For the rest of the article, we consider $G$ to be either the symplectic group $\SP_{2l}$ in type C or the \textit{special} orthogonal group $\SO_{2l}$ (type D) or $\SO_{2l+1}$ (type B).

\subsection{Parabolic subgroups}\label{ssect:parSubgr}

We have seen before that the standard Borel subgroups  of $G$  equal the intersection of $G$ with the standard Borel subgroup of $\GL_n$. For standard parabolic subgroups, this is only true for types $B$ and $C$; in type $D$ there are standard parabolic subgroups $P\subset \SO_n$ which are not given as the intersection of a standard parabolic subgroup of $\GL_n$ with $\SO_n$.

Following Malle-Testermann in \cite[Chapter 12]{MaTe}, the parabolic subgroups of a classical group $G$ are in bijection to the so-called totally isotropic flags. These are flags
\[V_1\subset ...\subset V_k, \]
such that every two elements $v,w\in V_i$ vanish according to the bilinear form of $G$, namely $(v,w)=0$. Given such isotropic flag $F$, the parabolic $P$ equals the stabilizer of $F$ in $G$ which we denote by $\stab_G(F)$.

Let $F=V_1\subset ...\subset V_k$ be a totally isotropic flag, such that $P=\stab_G(F)$ includes the standard Borel $B$. Then we define $d_i:=\dim_K V_i$, the dimension vector $d_F=d_P=(d_1,...,d_k,n,d_k,...,d_1)$ and define the corresponding $\A(k)$-representation $M_F=M_P$ of dimension vector $d_F$ which represents the flag in a symmetric manner as usually. 

\begin{example}\label{Ex:parStandardGL}
Let $n=6$ and look at the totally isotropic flag

\[F_1:=(V_1=\langle e_1\rangle \subset V_2=\langle e_1,e_2,e_3\rangle) \]

Then $F_1$ corresponds to the $\A(2)$-representation $M_{F_1}$ of dimension vector $(1,3,6,3,1)$ with coefficient quiver
\begin{equation}
\xymatrix@R=3pt{
\bullet\ar^1[r]&\bullet\ar^1[r]&\bullet&&\\
&\bullet\ar^1[r]&\bullet&&\\
&\bullet\ar^1[r]&\bullet&&\\
&&\bullet\ar^{-1}[r]&\bullet&\\
&&\bullet\ar^{-1}[r]&\bullet&\\
&&\bullet\ar^{-1}[r]&\bullet\ar^{-1}[r]&\bullet\\
}
\end{equation}
Its stabilizer in $G$ is given as the upper-block standard parabolic subgroup of $\GL_6$ of block sizes $(1,2,2,1)$, intersected with $G$.

Let now $n=4$ and set
\[F_2:=(V_1=\langle e_1,e_3\rangle) \]

Then $F_2$ corresponds to the $\A(1)$-representation $M_{F_2}$ of dimension vector $(2,4,2)$ with coefficient quiver
\begin{equation}
\xymatrix@R=3pt{
\bullet\ar^1[r]&\bullet&\\
&\bullet\ar^{-1}[r]&\bullet\\
\bullet\ar^1[r]&\bullet&\\
&\bullet\ar^{-1}[r]&\bullet\\
}
\end{equation}
Its stabilizer in $\SO(4)$ is not given as the intersection of $\SO(4)$ with an upper-block standard parabolic subgroup of $\GL_4$.
%Then \[P_F:=\stab_{\SO_{4}}(M_F)=\left\lbrace\left(\begin{array}{cccc}
%a&-aej-aif-abg &d&-gfa\\
%0&b&0&-db/a\\
%h&-cj/b&1/b&g\\
%0&-bh/a&0&1/a
%\end{array} \right)\mid bdh=0 \right\rbrace\]
%is not the intersection of a standard parabolic of $\GL_4$ with $\SO_4$.
 In fact,
\[\Lie \stab_{\SO(4)}(F_2)\cong \left\lbrace\left(\begin{array}{cccc}
a&c&d&0\\
0&b&0&-d\\
e&0&-b&-c\\
0&-e&0&-a
\end{array} \right)\mid a,b,c,d,e\right\rbrace\supset \mathfrak{b}.\]
\end{example}

Let us fix a standard parabolic $P$ now, together with the corresponding totally isotropic flag $F_P$ and the representation $M_P:= M_{F_P}$ of dimension vector $\mathbf{d}_P$. Our notions from Section \ref{sect:translation} generalize to this setup by looking at the variety $\SR_{\mathbf{d}_P}(\A(k))$ which is acted upon by the group 
\[\GL_{\sym}(P):=\GL(d_1)\times \GL(d_2)\times\cdots\times \GL(d_k)\times \Sym(n)\] where $\Sym(n)$ denotes either the symplectic or the orthogonal group on a vector space of dimension $n$, as before. Again, the open subset $\SR_{\mathbf{d}_P}(\A(k))^0$ corresponds to the full subcategory $\SRep(\A(k),\mathbf{d}_F)^0$ of $\SRep(\A(k),\mathbf{d}_P)$ of  maximal rank representations. 

We define the block vector $\mathbf{b}_P=(d_1,d_2-d_1,...,d_k-d_{k-1})$. In case of a standard parabolic which is induced by $\GL_n$, this vector gives the explicit structure of the parabolic as can be seen in Example \ref{Ex:parStandardGL}.

We obtain a translation as in Lemma \ref{lem:Bijection} which can be proven in the same way.

\begin{lemma}\label{lem:BijectionP}
There is a bijection between the isomorphism classes of symplectic/orthogonal $\A(k)$--representations in $\SRep(\A(k), \mathbf{d}_P)^0$ and symplectic/orthogonal $P$--orbits in $\N(2)$. 
\end{lemma}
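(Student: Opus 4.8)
The plan is to repeat the proof of Lemma~\ref{lem:Bijection} essentially verbatim, with the index $l$ replaced by $k$, the group $\GL_{\sym}$ replaced by $\GL_{\sym}(P)$, and the complete standard flag replaced by the totally isotropic flag $F_P$ realized as the representation $M_P$. First I would form the truncated symmetric algebra $\widetilde{\A(k)}$, obtained from $\A(k)$ by deleting the loop $\alpha$ together with the relation $\alpha^2$, and define the maximal-rank locus $\SR_{\mathbf{d}_P}(\widetilde{\A(k)})^0$ as before. The inclusion $\widetilde{\A(k)}\subset\A(k)$ induces a $\GL_{\sym}(P)$-equivariant surjection
$$
\pi\colon\SR_{\mathbf{d}_P}(\A(k))^0\twoheadrightarrow\SR_{\mathbf{d}_P}(\widetilde{\A(k)})^0
$$
forgetting the linear map attached to $\alpha$. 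Since $\omega$ is fixed by $\sigma$, condition \eqref{Eq:ConditionSymmetricRep} for the loop forces $M_\alpha\in\fsym(V)=\fg$, and the relation $\alpha^2=0$ forces $M_\alpha^2=0$; hence the fibre over any point is exactly $\N(2)=\{x\in\fg\mid x^2=0\}$, and $F:=\pi^{-1}(M_P)\cong\N(2)$.

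The crux is to show that $\GL_{\sym}(P)$ acts transitively on $\SR_{\mathbf{d}_P}(\widetilde{\A(k)})^0$ with $M_P$ as generating point. A maximal-rank symmetric representation of $\widetilde{\A(k)}$ of dimension vector $\mathbf{d}_P$ amounts to a chain of injections $K^{d_1}\hookrightarrow\cdots\hookrightarrow K^{d_k}\hookrightarrow V$ whose images, by \eqref{Eq:ConditionSymmetricRep}, form a totally isotropic flag of dimensions $(d_1,\dots,d_k)$ in $(V,\form)$, the dual half of the representation being forced by $\sigma$-symmetry. Two such representations lie in one $\GL_{\sym}(P)$-orbit if and only if the associated flags are $\Sym(V)$-conjugate, the factors $\GL(d_i)$ absorbing the choice of bases on $V_1,\dots,V_k$. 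Transitivity thus reduces to the statement that any two totally isotropic flags of a fixed type are conjugate under $\Sym(V)$, which is Witt's extension theorem.

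Next I would identify the isotropy group. The projection of $\stab_{\GL_{\sym}(P)}(M_P)$ onto the factor acting at $\omega$ is injective: because the maps $a_i$ are injective, an automorphism of $M_P$ is determined by its component $g_\omega$, which is recovered inductively from $\omega$ down to vertex $1$. Its image is exactly $\stab_{\Sym(V)}(F_P)$, the symmetric stabilizer of $M_P$ announced in the introduction, which equals $P$. Taking $G=\GL_{\sym}(P)$, $X=\SR_{\mathbf{d}_P}(\A(k))^0$, $Y=\SR_{\mathbf{d}_P}(\widetilde{\A(k)})^0=\GL_{\sym}(P)\cdot M_P$, $y_0=M_P$, $H=\stab\cong P$ and $F\cong\N(2)$, Theorem~\ref{Thm:BRBundle} then yields a bijection between $G$-orbits in $X$ — the isoclasses in $\SRep(\A(k),\mathbf{d}_P)^0$ — and $H$-orbits in $F$ — the $P$-orbits in $\N(2)$; here the $H$-action matches $P$-conjugation under the above isomorphism, and Corollary~\ref{Cor:Stabilizer} records the agreement of stabilizers.

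The main obstacle is the transitivity/stabilizer step in type $D$. As Example~\ref{Ex:parStandardGL} illustrates, a parabolic $P\subset\SO_{2l}$ need not be the intersection of $\SO_{2l}$ with a $\GL_n$-parabolic, and when the flag terminates in a maximal isotropic subspace ($d_k=l$) Witt's theorem gives transitivity only for the full orthogonal group $\Or_{2l}$, whereas $\SO_{2l}$ splits the maximal isotropics into two families. Care is therefore needed to reconcile the factor $\Sym(n)$ appearing in $\GL_{\sym}(P)$ with the special orthogonal parabolic $P$: one must check that the component of $\Or_n\setminus\SO_n$ interchanging the two families does not survive in $\stab(M_P)$, so that the isotropy group coincides with $P$ and transitivity on the relevant $\SO$-orbit is recovered. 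Away from this case the argument is a routine transcription of the Borel proof.
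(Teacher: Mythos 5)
Your argument is exactly the one the paper intends: for this lemma the paper offers no separate proof beyond the remark that it ``can be proven in the same way'' as Lemma~\ref{lem:Bijection}, and your transcription --- the truncated algebra $\widetilde{\A(k)}$, the $\GL_{\sym}(P)$-equivariant projection forgetting $\alpha$, the identification of the fibre with $\N(2)$, transitivity on the base via Witt's theorem, the identification of the isotropy group of $M_P$ with $P$, and the appeal to Theorem~\ref{Thm:BRBundle} --- is the intended proof and is complete in types B and C (in type B the extra central element $-I\in\Or_{2l+1}\setminus\SO_{2l+1}$ is harmless, as it fixes every flag and acts trivially by conjugation).

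Your type-D caveat, however, is aimed at the wrong sub-case. When the flag terminates in a maximal isotropic subspace ($d_k=l$) there is nothing to check: an improper isometry $g$ sends a maximal isotropic $W$ to one in the opposite $\SO_{2l}$-family (as $\dim(W\cap gW)\equiv l-1 \bmod 2$), so no element of $\Or_{2l}\setminus\SO_{2l}$ can fix $V_k$, whence $\stab_{\Or_{2l}}(M_P)\subseteq\SO_{2l}$ automatically; and the base being a single $\Or$-orbit rather than a single $\SO$-orbit is irrelevant, since Theorem~\ref{Thm:BRBundle} only requires transitivity of the acting group $\GL_{\sym}(P)$. The genuine difficulty is the opposite case, where no $V_i$ is maximal isotropic (e.g.\ $P=\stab_{\SO_{2l}}(\langle e_1\rangle)$ for $l\geq 3$): there $\stab_{\Or_n}(F_P)$ does contain improper isometries (swap $e_i\leftrightarrow e_{i^\ast}$ for a basis vector not touched by the flag), so the isotropy group $H$ produced by the fibre-bundle theorem strictly contains $P$, and the theorem identifies isoclasses with $H$-orbits, which may be strictly coarser than $P$-orbits --- indeed, for $l$ even the very even class $(2^l)$ splits into two $\SO_{2l}$-classes interchanged by any improper isometry, so $H$-orbits and $P$-orbits genuinely differ in general. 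To obtain the lemma as stated one must either show that every $P$-orbit in $\N(2)$ is stable under the extra component of $H$, or reformulate the bijection in terms of $\stab_{\Or_n}(F_P)$-orbits. The paper is equally silent on this point, so your write-up is no weaker than the source, but the sentence ``away from this case the argument is a routine transcription'' is exactly backwards.
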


\subsection{Orbits in type C}\label{ssect:symp_param_P}
We aim to classify the $P$-orbits in $\N(2)\subset \fsp_{n}$ by combinatorial objects and generalize the results of Subsection \ref{ssect:sympl_B} in a straight forward manner.

\begin{definition}
An \emph{enhanced symplectic oriented link pattern} (esolp. for short) of size $k$ of type $(b_1,...,b_k)$ consists of a  set of $k$ vertices $1,2,\cdots,k$ together with a collection of oriented arrows between different vertices, unoriented loops and oriented arrows with dots between arbitrary vertices. 

Denote by $x_i$  the number of sources of arrows, by $y_i$ the number of targets of arrows, by $z_i$ the number of unoriented loops and by $l_i$ the number of loops with dots at vertex $i$.
Then the following condition defines an esolp of such kind:
\begin{itemize}
\item[(Sp)]\label{Sp} $x_i+y_i+z_i-l_i\leq b_i$ holds for  each vertex $i$.
\end{itemize}
\end{definition}

\begin{theorem}\label{Thm:symp_param_P}
There is a natural bijection between the set of $P$-orbits in $\N(2)\subseteq \fsp_n$ and the set of esolps  of size $k$ and of type $(b_1,...,b_k)$.
\end{theorem}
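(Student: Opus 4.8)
The plan is to leverage the machinery already established for the Borel case, since the parabolic setup parallels it closely. By Lemma~\ref{lem:BijectionP}, the $P$-orbits in $\N(2)\subseteq\fsp_n$ are in bijection with isomorphism classes of symplectic $\A(k)$-representations in $\SRep(\A(k),\mathbf{d}_P)^0$, so the task reduces to a purely combinatorial one: parametrize these isoclasses by esolps of type $(b_1,\dots,b_k)$. First I would invoke Krull--Remak--Schmidt to write each such representation uniquely as a direct sum of the indecomposable symplectic $\A(k)$-modules classified earlier (namely $Z_{ii}^\pm$, $M_{ij}\oplus M_{ij}^\ast$, $D_{ij}^\pm\oplus C_{ij}^\pm$, and $Z_{ij}^\pm\oplus Z_{ji}^\pm$). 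As in the proof of Theorem~\ref{Thm:symp_param_B}, each indecomposable summand that involves the loop $\alpha$ contributes a feature to the coefficient quiver at the vertices $1,\dots,k$, and reading off this $\alpha$-induced subquiver produces the combinatorial datum.

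The assignment of indecomposables to arrows, dotted arrows and loops should mirror the dictionary of Theorem~\ref{Thm:symp_param_B}: a summand $D_{ij}^\pm\oplus C_{ij}^\pm$ yields an oriented (undotted) arrow between vertices $i$ and $j$, a summand $Z_{ij}^\pm\oplus Z_{ji}^\pm$ with $i\neq j$ yields a dotted arrow, and a summand $Z_{ii}^\pm$ yields a dotted loop at $i$; the summands $M_{ij}\oplus M_{ij}^\ast$ not touching $\alpha$ contribute nothing to the pattern. The essential new ingredient compared with the Borel case is that the vertex-spaces now have dimensions prescribed by $\mathbf{b}_P=(b_1,\dots,b_k)$ rather than all equal to one, so that multiple indecomposable summands can be supported at the same vertex. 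The condition that a valid multiset of summands assembles into a genuine representation of dimension vector $\mathbf{d}_P$ translates precisely into the counting inequality $(Sp)$: at each vertex $i$, the number $x_i+y_i+z_i-l_i$ of "consumed" dimensions must not exceed the available block dimension $b_i$. I would verify this by computing, for each indecomposable type, exactly how much of the vector space at vertex $i$ it occupies, and checking that the net bookkeeping gives the stated inequality.

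Concretely, the forward map sends an isoclass $[M]$ to its esolp by recording, at each vertex $i$, the arrows/dots/loops arising from summands supported there; the inverse reconstructs $M$ from an esolp by taking the corresponding multiset of indecomposables and filling out the remaining dimension at vertex $i$ (up to $b_i$) with the "trivial" summands $M_{i,l+1}\oplus M_{i,l+1}^\ast$. The main verification is that both maps are well-defined and mutually inverse, which amounts to showing that $(Sp)$ is exactly the condition for the multiset of summands to have total dimension vector $\mathbf{d}_P$ while respecting the maximal-rank (openness) constraint defining $\SRep(\A(k),\mathbf{d}_P)^0$.

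The step I expect to be the main obstacle is the precise dimension bookkeeping behind the inequality $(Sp)$, specifically handling the sign-decorations ($\pm$) on the indecomposables and the pairing forced by the symplectic duality $\nabla$. One must ensure that choosing, say, $D_{ij}^+\oplus C_{ij}^+$ versus $D_{ij}^-\oplus C_{ij}^-$ genuinely corresponds to distinct orientations of the same arrow without double-counting, and that the self-dual summands $Z_{ii}^\pm$ behave correctly as loops; getting the term $-l_i$ in the inequality right is the delicate point, since a dotted loop at $i$ contributes to the arrow count at $i$ in a way that must be compensated. Once the per-vertex dimension contributions are tabulated carefully for each of the symplectic indecomposables from the classification lemma, the equivalence with $(Sp)$ and hence the bijection follows, and Lemma~\ref{lem:BijectionP} closes the argument.
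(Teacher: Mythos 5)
Your proposal follows essentially the same route as the paper: reduce via the bijection lemma to isoclasses in $\SRep(\A(k),\mathbf{d}_P)^0$, apply Krull--Remak--Schmidt, and read off the esolp from the $\alpha$-induced subquiver of the coefficient quiver using the same dictionary of indecomposables, with the condition (Sp) encoding the per-vertex dimension bookkeeping (the paper delegates this to the table in Figure~\ref{fig:correspondenceParsympl}, including the case $i=j$ where $D_{s,s}^+\oplus C_{s,s}^+\cong D_{s,s}^-\oplus C_{s,s}^-$ produces the unoriented loops you flag as the delicate point). This matches the paper's argument in both structure and level of detail.
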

\begin{proof}
In an analoguous manner to Theorem \ref{Thm:symp_param_B}, by Krull--Remak--Schmidt, there is a bijection between the set of esolps of size $k$ of type $(b_1,...,b_k)$ and the set of isomorphism classes of symplectic representations of $\A(k)$ in $\SRep(\A(k),\mathbf{d}_P)^0$ which maps an isomorphism class $[M]$ of a symplectic representation $M$ to the subquiver of the coefficient quiver of $M$ induced by $M_\alpha$ and then restricts the latter as follows: all vertices $i\in \{1,2,...,l\}$ which correspond to the same step in the flag $F$, say to step $V_s\setminus V_{s-1}$ are glued together to vertex $s$ of the esolp. This way, we obtain $2k$ vertices. We are again able to shrink them to $k$ vertices as the table in Figure \ref{fig:correspondenceParsympl} shows - via this correspondence, we obtain a bijection between symplectic direct sum decompositions and esolps.
\begin{figure}\label{fig:correspondenceParsympl}
\begin{center}
\begin{tabular}{|l|lc|}
\hline
Indecomposable & Multiplicity &\\\hline
$M_{s,k+1}\oplus M_{s,k+1}^\ast$ &$\begin{array}{l}
b_s-x_s-y_s+l_s \\
b_s-x_s-y_s \end{array}$ &$\begin{array}{l}
 \mathrm{(symplectic)}\\
\mathrm{(orthogonal)}
\end{array}$\\\hline
&&\\[1ex]
$D_{i,j}^-\oplus C_{i,j}^-$& number of arrows & \xymatrix{\textrm{\tiny{i}}\ar@/^1pc/[r]&
\textrm{\tiny{j}}} \\\hline
&&\\[0.5ex]
$D_{s,s}^+\oplus C_{s,s}^+ \cong D_{s,s}^-\oplus C_{s,s}^-$ & number of loops  &\xymatrix{\textrm{\tiny{s}}\ar@(ur,ul)@{-}[]}\\\hline
&&\\[-1ex]
$D_{i,j}^+\oplus C_{i,j}^+$& number of arrows  &\xymatrix{\textrm{\tiny{i}}&
\textrm{\tiny{j}}\ar@/_1pc/[l]}\\\hline
&&\\[-1ex]
 $Z_{i,j}^-\oplus Z_{j,i}^-$ & number of arrows  &\xymatrix{\textrm{\tiny{i}}\ar@/^1pc/[r]|{\bullet}&
\textrm{\tiny{j}}}\\\hline
&&\\[-1ex]
  $Z_{ij}^+\oplus Z_{j,i}^+$& number of arrows &\xymatrix{\textrm{\tiny{i}}&
\textrm{\tiny{j}}\ar@/_1pc/[l]|{\bullet}}\\\hline
&&\\[-1ex]
 $\begin{array}{ll}
Z_{s,s}^- & \mathrm{(symplectic)}\\
Z_{s,s}^-\oplus Z_{s,s}^-& \mathrm{(orthogonal)}
\end{array}$ & number of loops  &\xymatrix{\textrm{\tiny{s}}\ar@(ul,ur)|{\bullet}}\\\hline
&&\\[-1ex]
$\begin{array}{ll}
Z_{s,s}^+ & \mathrm{(symplectic)}\\
Z_{s,s}^+\oplus Z_{s,s}^+& \mathrm{(orthogonal)}
\end{array}$ & number of loops  &\xymatrix{\textrm{\tiny{s}}\ar@(ur,ul)|{\bullet}}\\\hline
\end{tabular}
\end{center}\caption{Correspondence of indecomposables and esolps for $i<j<k+1$ and $s<k+1$}
\end{figure}

By Lemma \ref{lem:Bijection}, the claim follows.
\end{proof}

Clearly, solps are special esolps: they are of size $l$ and of type $(1,...,1)$, such that we obtain the classification of Borel-orbits.

\begin{example}
Let $P$ be the symplectic parabolic subgroup of block sizes $(4,2)$, thus, $b_1=4$ and $b_2=2$. Then a symplectic representation of dimension vector $(4,6,12,6,4)$ is represented by a pattern of $12$ coloured vertices which represents the map $M_{\alpha}$ of the representation, for example by
\vspace{0.5cm}
$$\xymatrix@C=4pt{
\fcolorbox{black}{green!25}{$\textrm{\tiny{1}}^{\color{green!25}\ast}$}\ar@/^1.5pc/[rr]
&\fcolorbox{black}{green!25}{$\textrm{\tiny{2}}^{\color{green!25}\ast}$}\ar@/^2.5pc/[rrrrrrrrr]
&\fcolorbox{black}{green!25}{$\textrm{\tiny{3}}^{\color{green!25}\ast}$}
&\fcolorbox{black}{green!25}{$\textrm{\tiny{4}}^{\color{green!25}\ast}$}\ar@/^1.5pc/[rrr]
&\fcolorbox{black}{blue!25}{$\textrm{\tiny{5}}^{\color{blue!25}\ast}$}
&\fcolorbox{black}{blue!25}{$\textrm{\tiny{6}}^{\color{blue!25}\ast}$}
&\fcolorbox{black}{blue!25}{$\textrm{\tiny{6}}^\ast$}
&\fcolorbox{black}{blue!25}{$\textrm{\tiny{5}}^\ast$}
&\fcolorbox{black}{green!25}{$\textrm{\tiny{4}}^\ast$}\ar@/^1.5pc/[lll]
&\fcolorbox{black}{green!25}{$\textrm{\tiny{3}}^\ast$}\ar@/_1.5pc/[rr] 
&\fcolorbox{black}{green!25}{$\textrm{\tiny{2}}^\ast$}
&\fcolorbox{black}{green!25}{$\textrm{\tiny{1}}^\ast$}}$$

\vspace{0.5cm}
This pattern corresponds to the indecomposable direct sum decomposition 
\[(D_{1,1}^+\oplus C_{1,1}^+)\oplus Z_{1,1}^+ \oplus (Z_{1,2}^-\oplus Z_{1,2}^+)\oplus (M_{2,3}\oplus M_{2,3}^{\ast}) \]
of a representation of the quiver $\Q_2$. The corresponding esolp is given by

\[ \xymatrix{\fcolorbox{black}{green!25}{\textrm{1}}\ar@(l,ul)|{\bullet}\ar@(dl,dr)@{-}[]\ar@/^1pc/[r]|{\bullet}&
\fcolorbox{black}{blue!25}{\textrm{2}}}\]

\vspace{0.5cm}
We see that $b_1-x_1-y_1+l_1=4-4-1+1=0$ and $b_2-x_2-y_2+l_2=2-0-1+0=1$; these equations show that the multiplicity of the indecomposable  $M_{1,3}\oplus M_{1,3}^{\ast}$ is zero and the multiplicity of the indecomposable $M_{2,3}\oplus M_{2,3}^{\ast}$ is one.
\end{example}

\subsection{Orbits in type B and D}\label{ssect:parparameven}

Let us consider a standard parabolic subgroup of $\SO_{2l+1}$ or $\SO_{2l}$ together with its totally isotropic flag $F$ now. Then the classification of orbits is done analogously to the symplectic case, but in type D we have to be careful, since not every parabolic is obtained from a standard flag.

\begin{definition}
An \emph{enhanced orthogonal oriented link pattern} (esolp. for short) of size $k$ of type $(b_1,...,b_k)$ consists of a  set of $k$ vertices $1,2,\cdots,k$ together with a collection of oriented arrows and oriented arrows with dots between these vertices. 

Denote by $x_i$  the number of sources of arrows, by $y_i$ the number of targets of arrows and by $z_i$ the number of unoriented loops at vertex $i$.
Then the following condition defines an eoolp of such kind:
\begin{itemize}
\item[(SO)]\label{SO} $x_i+y_i+2z_i\leq b_i$ holds for  each vertex $i$.
\end{itemize}
\end{definition}

An esolp is an eoolp, if $b_i=1$ implies that there are no loops at vertex $i$.

The classification of parabolic orbits follows similarly to the considerations in \ref{Thm:symp_param_P}.
\begin{theorem}\label{Thm:ortho_paramP}
There is a natural bijection between the set of $P$-orbits in $\N(2)\subseteq \fg$ and the set of eoolps of size $k$ and of type $(b_1,...,b_k)$.
\end{theorem}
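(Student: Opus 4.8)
The plan is to follow the blueprint already established for the symplectic parabolic case in Theorem~\ref{Thm:symp_param_P} and adapt it to the orthogonal (type $B$ and $D$) setting. By Lemma~\ref{lem:BijectionP}, it suffices to produce a bijection between isomorphism classes of orthogonal $\A(k)$--representations in $\SRep(\A(k),\mathbf{d}_P)^0$ and eoolps of size $k$ and type $(b_1,\dots,b_k)$. First I would invoke Krull--Remak--Schmidt to reduce each orthogonal representation in $\SRep(\A(k),\mathbf{d}_P)^0$ to its unique decomposition into indecomposable orthogonal summands, which by the classification lemma of Section~\ref{sect:repAl} are exactly $M_{ij}\oplus M_{ij}^\ast$, $D_{ij}^\pm\oplus C_{ij}^\pm$, $Z_{ij}^\pm\oplus Z_{ji}^\pm$ (for $i\neq j$) and the self-dual summand $M_{l+1,l+1}$. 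Each such summand is then recorded combinatorially by its action of the loop $M_\alpha$, and the multiplicities are read off the subquiver of the coefficient quiver induced by $M_\alpha$.

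Next I would carry out the vertex-gluing step exactly as in the symplectic proof: the $2l$ Borel-level vertices $1,\dots,l,l^\ast,\dots,1^\ast$ are collapsed according to the flag $F_P$, so that all vertices belonging to a single flag step $V_s\setminus V_{s-1}$ are identified to a single esolp vertex $s$; this produces $2k$ vertices, and the $\nabla$-duality pairs vertex $s$ with $s^\ast$, allowing us to fold down to $k$ vertices. Under this folding the indecomposable summands translate to the decorated arrows and loops exactly as in Figure~\ref{fig:correspondenceParsympl}, with the crucial orthogonal modification: the indecomposable $D_{s,s}^+\oplus C_{s,s}^+$ contributes an unoriented loop at $s$, while there is \emph{no} analogue of the symplectic summand $Z_{s,s}^\pm$ as a single indecomposable --- in the orthogonal setting a dotted loop at $s$ would force a $2$-nilpotent endomorphism lying in $\fo_2$, which must vanish (as observed in the Remark following the classification lemma), so dotted loops are forbidden precisely when the block admits no room for the doubled summand $Z_{s,s}^\pm\oplus Z_{s,s}^\pm$. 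This is the combinatorial origin of the defining inequality $x_i+y_i+2z_i\leq b_i$ of an eoolp: each unoriented loop consumes \emph{two} units of the block dimension $b_i$ rather than one, reflecting that the orthogonal indecomposable supporting a loop is genuinely higher-dimensional.

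I would then verify the inequality (SO) matches the dimension-vector constraint at each glued vertex $s$: the number of $M_{s,k+1}\oplus M_{s,k+1}^\ast$ summands is $b_s-x_s-y_s-2z_s\geq 0$, and non-negativity of this multiplicity is exactly condition (SO). The forward map sends a representation to its eoolp, and the inverse map reconstructs the isomorphism class from an eoolp by prescribing the indecomposable summands and their multiplicities via the orthogonal column of the correspondence table; both are well-defined and mutually inverse by Krull--Remak--Schmidt. Finally, applying Lemma~\ref{lem:BijectionP} transports this bijection to $P$--orbits in $\N(2)\subseteq\fg$, completing the proof.

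The main obstacle I anticipate is the type $D$ subtlety flagged in the text: not every standard parabolic of $\SO_{2l}$ arises as the stabilizer of a standard isotropic flag of the form used to build $M_P$ (cf.\ Example~\ref{Ex:parStandardGL}, where $\stab_{\SO(4)}(F_2)$ is not an intersection with a standard $\GL_4$-parabolic). I would handle this by checking that Lemma~\ref{lem:BijectionP} was proved for the representation $M_P$ attached to the totally isotropic flag $F_P$ rather than for a $\GL_n$-induced block structure, so that the bijection is stated intrinsically in terms of the flag and its block vector $\mathbf{b}_P$; consequently the combinatorial classification goes through uniformly for all standard parabolics in types $B$ and $D$, including the exceptional type $D$ parabolics, with no further case distinction needed beyond the disappearance of dotted loops in the orthogonal indecomposable list.
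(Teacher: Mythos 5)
Your proposal follows essentially the same route as the paper: reduce via Lemma~\ref{lem:BijectionP} to classifying isomorphism classes in $\SRep(\A(k),\mathbf{d}_P)^0$, apply Krull--Remak--Schmidt together with the correspondence table of Figure~\ref{fig:correspondenceParsympl} after gluing coefficient-quiver vertices according to the flag, and dispose of the non-standard type~$D$ parabolics by observing that the argument depends only on the flag $F_P$ and its block vector. Your dimension bookkeeping at each glued vertex (unoriented and dotted loops consuming two units of $b_s$ via the doubled orthogonal summands) is in fact spelled out more explicitly than in the paper's own proof, and is consistent with condition (SO).
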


\begin{proof}
In type B, every standard parabolic subgroup is given as the stabilizer of a totally isotropic \textbf{standard} flag.  Given a $P$-orbit $P.x$ in $\N(2)$ we look at the representation $M_x$ via Lemma \ref{lem:Bijection}. Consider the coefficient quiver of $M_x$ and restrict it to those $n$ vertices which represent the loop $M_{\alpha}$. Then the eoolp is obtained by identifying vertices according to the flag and then restricting to the vertices $1,...,k$ via the correspondence depicted in Figure \ref{fig:correspondenceParsympl} - similarly as in the symplectic case. Note that the vertex $k+1$ is  redundant, since it always represents the indecomposable $M_{\omega,\omega}$ (as in the proof of Theorem \ref{Thm:ortho_paramB}). 

In type D, there are two cases to consider.

\begin{enumerate}
\item The flag $F$ is a standard flag, such that $P$ is the intersection of a standard parabolic of $\GL_n$ with $\SO_n$.

In this case, the classification of the orbits is done similarly as in Theorem \ref{Thm:symp_param_P} with the correspondence table given in Figure \ref{fig:correspondenceParsympl}.

\item For every missing standard parabolic, the flag $F$ is \textbf{not} a standard flag and $P$ does not equal the intersection of a standard parabolic of $\GL_n$ with $\SO_n$.

In this case, the classification is still done in the same mannner, since we deal with the Krull-Remak-Schmidt decomposition into unique indecomposables.  Thus, Figure \ref{fig:correspondenceParsympl} is still a valid correspondence for the bijection between orbits and eoolps and the combinatorial description via eoolps is just only dependent on the block vector of the flag $(b_1,...,b_k)$.\qedhere
\end{enumerate}
\end{proof}
Clearly, oolps are special eoolps, they are of size $l$ and of type $(1,...,1)$, such that we obtain the classification of Borel-orbits.

Even though the classification of orbits for parabolics of non-standard totally isotropic flags in type D is done in a similar manner as in the other cases, these parabolics are more difficult to handle - especially if one is interested in partiular representative matrices of the $P$-orbits in $\N(2)$ - and we give them some more attention by calculating an example.

\begin{example}
Let us consider $n=6$ and the totally isotropic flag

\[F=(V_1=\langle e_1\rangle \subset V_2=\langle e_1,e_2,e_4\rangle). \]

Then the coefficient quiver of $M_F$ is
\begin{equation}
\xymatrix@R=3pt{
\bullet\ar^1[r]&\bullet\ar^1[r]&
\bullet&&\\
&\bullet\ar^1[r]&\bullet&&\\
M_F=&&\bullet\ar^{-1}[r]&\bullet&\\
&\bullet\ar^1[r]&\bullet&&\\
&&\bullet\ar^{-1}[r]&\bullet&\\
&&\bullet\ar^{-1}[r]&\bullet\ar^{-1}[r]&\bullet\\}
\end{equation}
and the isomorphism classes of indecomposables are listed in the table in Figure \ref{fig:exampleP}.

\begin{figure}[ht]
\begin{center}\includegraphics[trim=130 105 25 145,clip,width=270pt]{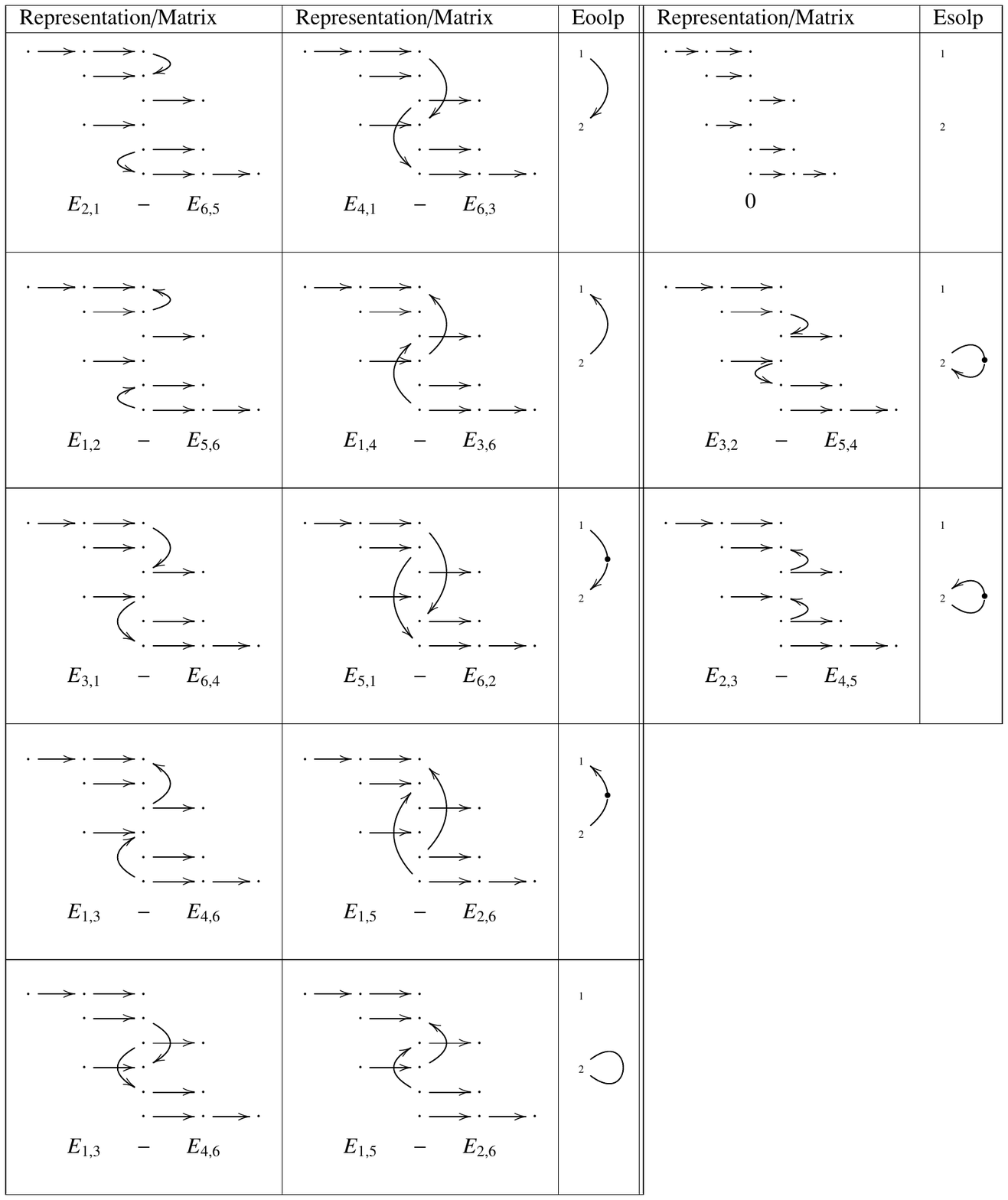}\end{center}\caption{Isomorphism classes of orthogonal representations 
in $\SRep(\A(2), \mathbf{d}_P)^0$ and their combinatorial interpretation}\label{fig:exampleP}
\end{figure}

 \end{example}

\section{Restriction to the nilradical}
If we restrict a parabolic action on $\N(2)$ to the nilradical $\mathfrak{n}(2)$ of $2$-nilpotent upper-triangular matrices in the given Lie algebra, then we still have a parabolic action. The parametrization of the orbits can be obtained from our parametrizations of Sections \ref{sect:paramOrbits} and \ref{sect:paramOrbits} straight away. The action of the Borel subgroup in the symplectic case is parametrized in detail in \cite{BaMe}, where Barnea and Melnikov also derive a description of the orbit closures and look at applications to orbital varieties in detail. In \cite{GMMP}, Gandini, Maffei, M\"oseneder Frajria and Papi consider the more general approach of $B$-stable abelian subalgebras of the
nilradical of $\mathfrak{b}$ in which they parametrize the $B$-orbits and describe their closure
relations.

\begin{definition}
A \emph{symplectic link pattern} (slp for short) of size $k$ is a symplectic oriented link pattern, such that every arrow is oriented \textit{from right to left} and every loop is oriented counterclockwise. 
In the same way, the natural notion of \emph{orthogonal link pattern} (orlp for short)
 \emph{enhanced symplectic} and \emph{enhanced orthogonal link pattern} is obtained.
\end{definition}
The sets of (enhanced) symplectic and (enhanced) orthogonal link patterns are obtained by taking all such oriented patterns and deleting the orientation. 
\begin{corollary}
There is a bijection between the parabolic orbits in the  symplectic nilradical $\mathfrak{n}(2)$ (or orthogonal $\mathfrak{n}(2)$, resp.) and the set of enhanced symplectic (or orthogonal, resp.) link patterns.
\end{corollary}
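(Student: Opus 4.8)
The plan is to deduce the corollary from the parabolic parametrizations already established in Theorems~\ref{Thm:symp_param_P} and \ref{Thm:ortho_paramP} (and, in the Borel case, Theorems~\ref{Thm:symp_param_B} and \ref{Thm:ortho_paramB}), by singling out, inside the set of enhanced oriented link patterns, precisely those that index orbits lying in the nilradical. The first step is to record that $\mathfrak{n}(2)$ is a $P$--stable subvariety of $\N(2)$, as asserted at the beginning of this section; consequently $\mathfrak{n}(2)$ is a union of $P$--orbits, and a given $P$--orbit of $\N(2)$ is contained in $\mathfrak{n}(2)$ if and only if it meets $\mathfrak{n}(2)$, that is, if and only if one (equivalently every) representative is (block) upper--triangular. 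Hence the $P$--orbits in $\mathfrak{n}(2)$ are exactly the $P$--orbits of $\N(2)$ whose representative is upper--triangular, and the task reduces to a combinatorial selection among enhanced oriented link patterns.

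For the selection criterion I would use the explicit representatives attached to the indecomposable summands, namely the table in the proof of Theorem~\ref{Thm:symp_param_B} and Figure~\ref{fig:correspondenceParsympl}. Since the representative of an orbit is the direct sum of the representatives of the summands in its Krull--Remak--Schmidt decomposition, and these contribute to pairwise disjoint matrix positions, the whole representative lies in $\mathfrak{n}$ if and only if each summand's representative does. Running through the dictionary one checks, type by type, that the summands $D_{i,j}^+\oplus C_{i,j}^+$, $Z_{i,j}^+\oplus Z_{j,i}^+$ and $Z_{i,i}^+$ have upper--triangular representatives $E_{i,j}-E_{j^\ast,i^\ast}$, $E_{i,j^\ast}\pm E_{j,i^\ast}$ and $E_{i,i^\ast}$ (here $k^\ast=n-k+1$ forces the relevant row indices to precede the column indices), whereas their $D^-,C^-,Z^-$ counterparts contribute entries strictly below the (block) diagonal, and the within--block summands $D_{s,s}^+\oplus C_{s,s}^+$ drawn as undotted loops do not lie in the nilradical either. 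Thus the summands surviving the restriction are exactly those depicted as right--to--left arrows (with or without dots) and counterclockwise dotted loops, together with the neutral $M_{s,k+1}\oplus M_{s,k+1}^\ast$.

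Collecting this, the $P$--orbits in $\mathfrak{n}(2)$ correspond bijectively to those enhanced oriented link patterns all of whose arrows point from right to left and all of whose loops are oriented counterclockwise. By definition this is precisely the datum of an enhanced symplectic (respectively orthogonal) link pattern together with an orientation prescribed by the convention; since the convention reconstructs the orientation uniquely, forgetting it is a bijection onto the set of enhanced symplectic (orthogonal) link patterns. Composing with Theorem~\ref{Thm:symp_param_P} (respectively~\ref{Thm:ortho_paramP}) yields the asserted bijection, and in the symplectic Borel case one recovers the symplectic link patterns of \cite{BaMe}.

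The main obstacle I anticipate is the first step together with the uniformity of the selection criterion. One must pin down which nilradical is meant and verify its $P$--stability cleanly (this is immediate for $B$, where $\mathfrak{n}$ is the nilradical of $\mathfrak{b}$, but for a genuine parabolic the block--diagonal Levi does \emph{not} preserve the strictly upper--triangular matrices, so the relevant stable object is the nilradical of $\mathfrak{p}$), and one must then confirm that the upper--triangularity test cuts out the orientation convention uniformly across all summand types and in both parities of the orthogonal case. The odd orthogonal case requires handling the fixed middle vertex, i.e. the redundant summand $M_{\omega,\omega}$ as in the proof of Theorem~\ref{Thm:ortho_paramB}, and the non--standard totally isotropic flags in type $D$ require checking that Figure~\ref{fig:correspondenceParsympl} remains valid and that the upper--triangularity test is unaffected by the non--standard shape of $M_P$.
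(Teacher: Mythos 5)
Your argument is correct and follows essentially the same route as the paper, whose proof is just the two--sentence core of your second paragraph: an orbit lies in the nilradical iff the map at the loop is upper--triangular, iff every arrow of the pattern points right--to--left (and every dotted loop is counterclockwise), whence forgetting the now--forced orientation gives the bijection with (enhanced) symplectic/orthogonal link patterns. The stability and type--$D$ issues you flag at the end are real ambiguities that the paper silently glosses over, and your resolution (reading $\mathfrak{n}$ as the nilradical of $\mathfrak{p}$, hence discarding the within--block undotted loops) is the one that makes the statement well posed.
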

\begin{proof}
The fact that all arrows are oriented from right to left corresponds equivalently to the nilpotent map at the loop being upper-triangular. Thus, these are directly the nilpotent elements contained in the nilradical of the particular Lie algebra.
\end{proof}

We end the section with giving an example.
\begin{example}
For $l=2$, we consider the Borel-action. Figures \ref{fig:exampleB} and \ref{fig:ortho_exampleB4}  show the possible patterns. Taking away the orientation is equivalent to only considering upper-triangular matrices. Thus, the following table gives a complete list of representatives of orbits for type C. The patterns which are marked green give a complete list of orthogonal $B$-orbits.

$$
\begin{tabular}{|c|c|c|c|c|c|}
\hline\cellcolor{green!25}&\cellcolor{green!25}&\cellcolor{green!25}&&&\\[-1ex]
\cellcolor{green!25}\xymatrix@C=4pt{
\textrm{\tiny{1}}&\textrm{\tiny{2}}}&\cellcolor{green!25}\xymatrix@C=4pt{
\textrm{\tiny{1}}\ar@/^1.5pc/@{-}[r]&\textrm{\tiny{2}}
}&
\cellcolor{green!25}\xymatrix@C=4pt{
\textrm{\tiny{1}}\ar@/^1.5pc/@{-}[r]|{\bullet}&\textrm{\tiny{2}}}
&\xymatrix@C=4pt{
\textrm{\tiny{1}}\ar@(ul,ur)@{-}|{\bullet}&\textrm{\tiny{2}}}
&\xymatrix@C=4pt{
\textrm{\tiny{1}}&\textrm{\tiny{2}}\ar@(ul,ur)@{-}|{\bullet}}
&
\xymatrix@C=4pt{
\textrm{\tiny{1}}\ar@(ul,ur)@{-}|{\bullet}&\textrm{\tiny{2}}\ar@(ul,ur)@{-}|{\bullet}
}
\\\hline
\end{tabular}
$$
\end{example}

\end{document}